\newcommand{\abs}[1]{\ensuremath{\left| #1 \right|}}
\newcommand{\bm}[1]{\mbox{\boldmath $ {#1} $}}
\newcommand{\vz}{\bm{\hat{0}}}
\newcommand{\vo}{\bm{\hat{1}}}
\newcommand{\var}{\operatorname{Var}}
\newcommand{\sech}{\operatorname{sech}}
\newcommand{\csch}{\operatorname{csch}}
\newcommand{\defeq}{:=}
\newcommand{\hc}[1]{\ensuremath{\mathbb{Q}_{ #1 }}}
\newcommand{\fpptime}{\vartheta}
\renewcommand{\epsilon}{\varepsilon}
\newtheorem{theorem}{Theorem}[section]
\newtheorem{lemma}[theorem]{Lemma}
\newtheorem{proposition}[theorem]{Proposition}
\newtheorem{corollary}[theorem]{Corollary}
\theoremstyle{definition}
\newtheorem{remark}[theorem]{Remark}
\renewcommand{\qed}{\hfill \mbox{\raggedright \rule{0.1in}{0.1in}}}
\numberwithin{equation}{section}
\numberwithin{theorem}{section}
\begin{document}
\title{Unoriented first-passage percolation on the $n$-cube}
\author{Anders Martinsson}
\address{Department of Mathematical Sciences, Chalmers University Of Technology and University of Gothenburg, 41296 Gothenburg, Sweden}
\email{andemar@chalmers.se}
\date{\today}
\keywords{first-passage percolation, Richardson's model, hypercube, branching translation process, lower bound on Richardson's model}
\subjclass[2010]{60K35, 60C05}

\begin{abstract}
The $n$-dimensional binary hypercube is the graph whose vertices are the binary $n$-tuples $\{0, 1\}^n$ and where two vertices are connected by an edge if they differ at exactly one coordinate. We prove that if the edges are assigned independent mean 1 exponential costs, the minimum length $T_n$ of a path from $(0, 0, \dots, 0)$ to $(1, 1, \dots, 1)$ converges in probability to $\ln(1+\sqrt{2}) \approx 0.881$. It has previously been shown by Fill and Pemantle (1993) that this so-called first-passage time asymptotically almost surely satisfies $\ln(1+\sqrt{2}) - o(1) \leq T_n \leq 1+o(1)$, and has been conjectured to converge in probability by Bollobás and Kohayakawa (1997). A key idea of our proof is to consider a lower bound on Richardson's model, closely related to the branching process used in the article by Fill and Pemantle to obtain the bound $T_n \geq \ln\left(1+\sqrt{2}\right)-o(1)$. We derive an explicit lower bound on the probability that a vertex is infected at a given time. This result is formulated for a general graph and may be applicable in a more general setting.
\end{abstract}

\maketitle

\section{Introduction}
The $n$-dimensional binary hypercube $\hc{n}$ is the graph with vertex set $\{0, 1\}^n$ where two vertices share an edge if they differ at exactly one coordinate. We let $\vz$ and $\vo$ denote the all zeroes and all ones vertices respectively. For any vertex $v\in \hc{n}$, we let $\abs{v}$ denote the number of coordinates of $v$ that are $1$. A path $v_0 \rightarrow v_1 \rightarrow \dots \rightarrow v_k$ in $\hc{n}$ is called \emph{oriented} if $\abs{v_i}$ is strictly increasing along the path.

First-passage percolation is a random process on a graph $G$, which was introduced by Hammersley and Welsh. In this process, each edge $e$ in the graph is assigned a random variable $W_e$ called the \emph{passage time} of $e$. In this paper, the passage times will always be independent exponentially distributed random variables with expected value $1$. The usual way in which this process is described is that there exists some vertex $v_0 \in G$ which is assigned some property, usually either that it is infected ($v_0$ is the source of some disease) or wet ($v_0$ is connected to a water source), which then spreads throughout the graph. The passage time of an edge corresponds to the time it takes for an infection to spread in any direction along the edge, that is, when a vertex $v$ gets infected the infection spreads to each neighbor $w$ after $W_{\{v,w\}}$ time, assuming $w$ is not already infected at that time. More concretely, we can let the edge weights generate a metric on $G$. For a path $\gamma$ in $G$ we define the \emph{passage time} of $\gamma$ as the sum of passage times of the edges along $\gamma$. Moreover, for any two vertices $v, w \in G$, we say that the \emph{first-passage time} from $v$ to $w$, denoted by $d_{W}(v, w)$, is the infimum of passage times over all paths from $v$ to $w$ in $G$. Then for any $v \in G$, the time at which $v$ is infected is given by $d_W(v_0, v)$.

An alternative way to formulate first-passage percolation with independent exponentially distributed passage times is to consider the process $\{R(\cdot, t)\}_{t\geq 0}$, where for each $t\geq 0$, $R(v, t)$ is the map from the vertex set of $G$ to $\{0, 1\}$ given by
\begin{equation}
R(v, t) = \begin{cases} 1 &\mbox{if } d_W(v_0, v) \leq t\\ 0 &\mbox{otherwise,}\end{cases}
\end{equation}
that is, $R(v, t)$ is the indicator function for the event that $v$ is infected at time $t$. When the edge passage times are independent exponentially distributed with mean one, the memory-less property implies that the process $\{R(\cdot, t)\}_{t\geq 0}$ is Markovian, and its distribution is given by the initial condition $R(\cdot, 0) = \delta_{v_0, \cdot}$ together with the transitions $\{R(\cdot) \rightarrow R(\cdot) + \delta_{v,\cdot}\}$ at rate equal to the number of infected neighbors of $v$ if $v$ is healthy, and $0$ if $v$ is infected, see \cite{d88}. Here $\delta_{\cdot,\cdot}$ denotes the Kronecker delta function. This Markov process is known as Richardson's model.

First-passage percolation and Richardson's model on the hypercube have previously been studied by Fill and Pemantle \cite{fp93}, and later by Bollob\'{a}s and Kohayakawa \cite{bk97}. For Richardson's model we always assume that the original infected vertex is $\vz$, though by transitivity of the hypercube it is clear that the analogous statements hold for any starting vertex. The quantities considered in these articles of most relevance to this paper are the first-passage time from $\vz$ to $\vo$, which we denote by $T_n$, the \emph{oriented first-passage time} from $\vz$ to $\vo$, and the \emph{covering time}. Note that, in terms of Richardson's model, $T_n$ is the time until the vertex furthest from the starting point gets infected. The oriented first-passage time is a simplified version of the first-passage time, first proposed by Aldous \cite{a89}, where the minimum is only taken over all oriented paths from $\vz$ to $\vo$. The covering time is the random amount of time in Richardson's model on $\hc{n}$ until all vertices are infected or, equivalently, $\max_{v\in \hc{n}} d_W(\vz, v)$, the maximum first-passage time from $\vz$ to any other vertex in $\hc{n}$.

In case of oriented first-passage percolation, it was shown by Fill and Pemantle that the oriented first-passage time from $\vz$ to $\vo$ converges to $1$ in probability as $n\rightarrow \infty$. The fact that $1-o(1)$ is an asymptotic almost sure lower bound had already been observed by Aldous in \cite{a89}, and can be shown in a straight-forward manner by considering the expected number of oriented paths from $\vz$ to $\vo$ with passage time at most $t$. The argument by Fill and Pemantle for the upper bound is  essentially a second moment analysis on the number of such paths, though as they remark, a direct application of the second moment method can only show that the probability that the oriented first-passage time is at most $1+\varepsilon$ is bounded away from $0$. To circumvent this, they consider a ``variance reduction trick'', which effectively means that they consider a slightly different random variable.

For the unoriented first-passage time from $\vz$ to $\vo$, Fill and Pemantle showed that, as $n\rightarrow \infty$, we have
\begin{equation}
\ln\left(1+\sqrt{2}\right) - o(1) \leq T_n \leq 1+o(1)
\end{equation}
with probability $1-o(1)$. The upper bound follows directly from the oriented first-passage time. They remark that they doubt the upper bound is sharp, but state that they do not know how to improve it. Prior to this article, this seems to be the best known upper bound on $T_n$. For the lower bound, Fill and Pemantle relayed an argument by Durrett. In this argument we consider a random process on $\hc{n}$, which Durrett calls a branching translation process (BTP). We will postpone the definition of this process to the next section, but the essential difference to Richardson's model is that we allow each site to contain multiple instances of the infection at the same time. Durrett argues that this process stochastically dominates Richardson's model in the sense that it is possible to couple the models such that the infected vertices in Richardson's model are always a subset of the so-called occupied vertices in the BTP. He proves that the time at which $\vo$ becomes occupied tends to $\ln\left(1+\sqrt{2}\right)$ in probability as $n\rightarrow \infty$. As BTP stochastically dominates Richardson's model, this directly implies that $T_n \geq \ln\left(1+\sqrt{2}\right)-o(1)=0.881\dots-o(1)$ with probability $1-o(1)$.

Bollob\'{a}s and Kohayakawa \cite{bk97} showed that many global first-passage percolation properties on $\hc{n}$, such as the covering time and the graph diameter with respect to $d_W(\cdot, \cdot)$, can be bounded from above in terms of $T_n$. They defined the quantity
\begin{equation}
T_\infty = \inf \left\{t \in \mathbb{R}\middle\vert \mathbb{P} \left(T_n \leq t\right)\rightarrow 1 \text{ as }n\rightarrow \infty \right\}.
\end{equation}
Their main result is that asymptotically almost surely the covering time is at most $T_\infty + \ln 2 + o(1)$ and the graph diameter is at most $T_\infty + 2 \ln 2 + o(1)$. Note that it follows from the results by Fill and Pemantle that $\ln\left(1+\sqrt{2}\right) \leq T_\infty \leq 1$. Furthermore, it is easy to see that if $T_n$ converges in probability as $n\rightarrow \infty$, then it must converge to $T_\infty$. In fact, Bollob\'{a}s and Kohayakawa explicitly conjectured that this is the case, and they consequently referred to $T_\infty$ as simply the first-passage percolation time between two antipodal vertices in $\hc{n}$. While their article does not prove that $T_n$ converges in probability, the ideas do have some implications for $T_n$. For instance, with some small modifications of their proof it follows that if $T_n$ converges in distribution, then the limit must be concentrated on one point, meaning that $T_n$ converges in probability.

Besides first-passage percolation, percolation on the hypercube with restriction to oriented paths has also been considered in regards to Bernoulli percolation by Fill and Pemantle (in the same article), and, more recently, accessibility percolation\footnote{The name accessibility percolation is not mentioned in the cited article. The term was coined by Joachim Krug and Stefan Nowak after its writing.} by Hegarty and the author in \cite{hm13}. Common for these three cases of oriented percolation is that the proofs are based on second moment analyses. Arguably, this is made possible by the relatively simple combinatorial properties of oriented paths. We have $n!$ oriented paths from $\vz$ to $\vo$ in $\hc{n}$, all of length $n$ and all equivalent up to permutation of coordinates. Perhaps more importantly, one can derive good estimates on the number of pairs of oriented paths from $\vz$ to $\vo$ that intersect a given number of times, something which is made possible by the natural representation of oriented paths as permutations. In contrast, general paths from $\vz$ to $\vo$ do not seem to have a similar representation in any meaningful way, and in any case, there is certainly a lot more variation between general paths than oriented such. Hence, it seems that these type of ideas from oriented percolation on the hypercube cannot be transferred to unoriented percolation.

The most promising approach to improve the result by Fill and Pemantle for $T_n$ seems to be the BTP. Comparing the BTP to path-counting arguments, on the hypercube the former has the advantage that a number of relevant quantities, such as moment estimates, can be expressed by explicit analytical expressions, hence circumventing the problem of counting paths. However, beyond the fact that the BTP stochastically dominates Richardson's model, the relation between the two models is fairly subtle. It is therefore not immediately clear how proving anything about the BTP could imply upper bounds on the first-passage time.

In this article, we propose a way to do precisely this. A central idea of our approach is to consider a subprocess of the BTP with two important properties: Firstly, Richardson's model is stochastically sandwiched between the full BTP and this subprocess, and secondly, it is possible to derive an explicit lower bound on the probability that a vertex is occupied at a given time in this subprocess, expressed in tractable quantities for the BTP. Applying these ideas to the hypercube, we are able to resolve the problem of determining the limit of $T_n$. This result is summarized in the following Theorem, which is the main result of this paper:
\begin{theorem}\label{thm:unorientedFPP}
Let $T_n$ denote the first-passage time from $\vz$ to $\vo$ in $\hc{n}$ with exponentially distributed edge costs with mean $1$. For any $1\leq p < \infty$ we have $\|T_n-\ln\left(1+\sqrt{2}\right)\|_p=\Theta\left(\frac{1}{n}\right)$. In particular, we have $\mathbb{E}T_n = \ln\left(1+\sqrt{2}\right)+O\left(\frac{1}{n}\right)$ and $\var\left(T_n\right)=\Theta\left(\frac{1}{n^2}\right)$.
\end{theorem}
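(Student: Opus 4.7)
The lower bound $T_n \geq \ln(1+\sqrt{2}) - o(1)$ in probability is already given by Durrett's BTP argument reported in \cite{fp93}, and I will use it directly for the lower tail. A diagonalization of the adjacency matrix of $\hc{n}$ via the tensor product of coordinate-flip operators shows that $\mathbb{E}[X_t(\vo)] = \sinh(t)^n$ in the BTP, so Markov's inequality combined with the BTP-domination of Richardson gives $\mathbb{P}(T_n \leq t) \leq \sinh(t)^n$. At $t = \ln(1+\sqrt{2}) - C/n$ this bound is $(1 - \sqrt{2}C/n + O(1/n^2))^n \to e^{-\sqrt{2} C}$, so the lower-tail rate is $\Theta(1/n)$ essentially for free; more quantitatively, $\mathbb{P}(T_n < \ln(1+\sqrt{2}) - s) \leq e^{-\sqrt{2} n s + O(n s^2)}$ for small $s$, which integrates against $p s^{p-1}\,ds$ to give an $O(1/n^p)$ contribution to $\mathbb{E}|T_n - \ln(1+\sqrt{2})|^p$. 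The whole difficulty therefore lies in the matching upper tail $\mathbb{P}(T_n > \ln(1+\sqrt{2}) + s) \to 0$ at the same quantitative rate.

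Following the outline in the introduction, my plan is to construct a subprocess $\mathcal{S}$ of the BTP with two properties: (i) $\mathcal{S}$'s occupied set at time $t$ is contained in Richardson's infected set, so occupation of $\vo$ in $\mathcal{S}$ by time $t$ forces $T_n \leq t$; and (ii) the moments of the number of $\mathcal{S}$-particles at a vertex $v$ by time $t$ can be controlled in terms of BTP moments. The naive attempt---restricting the BTP to self-avoiding lineages---in fact runs the wrong way, since the fresh independent BTP edge weights make such a subprocess faster rather than slower than Richardson, so the definition of $\mathcal{S}$ must involve a coupling in which lineages are forced to reuse Richardson's shared edge weights in some prescribed order. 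With such an $\mathcal{S}$ in hand, Paley--Zygmund applied to the count $N_t^{\mathcal{S}}(v)$ gives a basic estimate $\mathbb{P}(N_t^{\mathcal{S}}(v) > 0) \geq (\mathbb{E}N_t^{\mathcal{S}}(v))^2 / \mathbb{E}(N_t^{\mathcal{S}}(v))^2$; but as this is typically far too weak, I would bootstrap it by decomposing paths from $\vz$ to $v$ through an intermediate ``ancestor'' vertex at some earlier time $s < t$---combining a first-moment estimate at time $s$ with a conditional lower bound on reaching $v$ from the ancestor within $t-s$. This construction together with the bootstrap is the main technical obstacle and presumably constitutes the general-graph lower-bound tool advertised in the abstract.

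Specializing back to $\hc{n}$, the relevant first and second moments reduce by vertex-transitivity and the tensor structure of the spectrum to explicit Laplace-transform expressions, and the critical time at which $\mathbb{E}N_t(\vo) \asymp 1$ is precisely $\ln(1+\sqrt{2})$. Taylor-expanding the bootstrap bound around this critical time should then yield $\mathbb{P}(T_n > \ln(1+\sqrt{2}) + C/n) \leq e^{-c C}$ for $C$ large, completing the in-probability $\Theta(1/n)$ estimate. To upgrade to the $L^p$ statement, for moderate deviations $s \in (C/n, s_0)$ one continues the same bootstrap---the exponential gap in the BTP first moment propagating to $\mathbb{P}(T_n > \ln(1+\sqrt{2})+s) \leq e^{-\Omega(n s)}$---and for $s = \Omega(1)$ one invokes the oriented-FPP bound $T_n \leq 1 + o(1)$ from \cite{fp93} together with standard tail estimates for Gamma-distributed path weights. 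Integrating these tails against $p s^{p-1}\,ds$ with the lower-tail integral from the first paragraph then delivers $\|T_n - \ln(1+\sqrt{2})\|_p = \Theta(1/n)$ for every $1 \leq p < \infty$.
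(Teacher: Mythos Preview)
Your lower-tail treatment matches the paper (its Proposition~\ref{prop:Tnminusnorm} is exactly your Markov/BTP integral). The serious divergence is on the upper-tail side, and there are two genuine gaps.

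First, the general-graph lower bound in the paper is \emph{not} Paley--Zygmund. The subprocess that sits below Richardson is the set of \emph{uncontested} particles: those whose entire ancestral line was never preceded at any of its sites by \emph{any} other BTP particle (not merely by its own ancestors). Self-avoidance alone ($a(x)=0$) is not enough and does not give a sub-Richardson process; the additional condition $b(x)=0$ is what makes Lemma~\ref{lemma:basicprop}(iii) work. The resulting bound (Theorem~\ref{thm:uncontestedBTP}) is $\mathbb{P}(Z_0(v,t)>0)\ge S(v,t)\,e^{-B(v,t)/S(v,t)}$, proved via a Palm/Slivnyak--Mecke computation and a Poisson-thinning estimate rather than a second-moment inequality. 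On the hypercube at $t=\vartheta$ this yields only $\mathbb{P}(T_n\le\vartheta)\ge\varepsilon$ for a fixed constant $\varepsilon>0$ (Corollary~\ref{cor:Tnnotzero}); as in the oriented case, a second-moment style argument cannot by itself push the probability to $1$.

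Second, and more importantly, your bootstrap is the wrong kind. Decomposing a single path at an intermediate time $s<t$ still leaves you with one event of constant probability; there is no mechanism in that picture to amplify $\varepsilon$ to $1-e^{-cC}$. The paper's bootstrap is structural rather than temporal: it uses the self-similarity of $\hc{n}$ to find two \emph{edge-disjoint} sub-hypercubes $H_1\cong\hc{n-2}$ and $H_2'\subset H_2\cong\hc{n-2}$, reached from $\vz$ and $\vo$ via the two cheapest incident edges. This produces an inequality
\[
T_n^+ \;\le\; W_{a_2}+W_{b_2}\;+\;(1-\xi)\,T_{n-2}^+,
\]
with $\xi$ a Bernoulli($\ge\varepsilon_2$) variable \emph{independent} of $T_{n-2}^+$ (independence coming from edge-disjointness) and $W_{a_2}+W_{b_2}$ of order $1/n$. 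Taking $L^p$-norms gives $\|T_n^+\|_p\le O(1/n)+(1-\varepsilon_2)^{1/p}\|T_{n-2}^+\|_p$, which iterates to $O(1/n)$. No exponential tail bound of the form $\mathbb{P}(T_n>\vartheta+s)\le e^{-\Omega(ns)}$ is ever established or needed; the recursion does the work directly in $L^p$. Your Taylor-expansion step from a constant-probability bound to $e^{-cC}$ is the missing idea, and the paper's substitute for it is this subcube recursion.
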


A direct consequence of this result is that $T_\infty = \ln\left(1+\sqrt{2}\right)$, which in particular improves the best known upper bound on the covering time to  $\ln\left(1+\sqrt{2}\right) + \ln 2 + o(1) = 1.574\dots + o(1)$. One can compare this with the best known lower bound  $\frac{1}{2} \ln \left(2+\sqrt{5}\right)+\ln 2 - o(1)=1.414\dots - o(1)$, as shown by Fill and Pemantle.

Given this result for $T_n$, the question naturally arises how the path from $\vz$ to $\vo$ with the smallest first-passage time typically behaves. In particular, how long is this path (here length means the number of edges along the path), and how are the ``backsteps'' distributed along it. Let us denote this path by $\Gamma_n$. This question may also be interesting from the point of view of accessibility percolation. Though strictly speaking not part of the mathematical formulation of accessibility percolation, shorter paths are considered more biologically feasible. Hence, an important question for unoriented accessibility percolation on the hypercube is how much longer typical accessible paths are in this case compared to oriented accessibility percolation.

We propose the following way to describe the asymptotic properties of $\Gamma_n$: Run a simple random walk on $\hc{n}$ starting at $\vz$ with rate $n$ for $\ln\left(1+\sqrt{2}\right)$ time, and condition on the event that the walk stops at $\vo$. Let $\sigma_n$ denote the traversed path.
\begin{theorem}\label{thm:sigmamin}
Any asymptotically almost sure property of $\sigma_n$ is also an asymptotically almost sure property of $\Gamma_n$. In particular, the length of $\Gamma_n$ is asymptotically almost surely $\sqrt{2}\ln\left(1+\sqrt{2}\right)n \pm o(n)$.
\end{theorem}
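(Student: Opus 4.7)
Set $t^\star = \ln(1+\sqrt{2})$ and let $W_L$ denote the number of walks of length $L$ from $\vz$ to $\vo$ in $\hc{n}$. The exponential generating function identity $\sum_L W_L x^L/L! = \sinh(x)^n$, together with the key identity $\sinh(t^\star) = 1$, yields the explicit formula $\mathbb{P}(\sigma_n = w) = (t^\star)^{|w|}/|w|!$ for every walk $w$ from $\vz$ to $\vo$ (the normalizing constant being $\sinh(t^\star)^n = 1$); this is precisely what singles out the value $\ln(1+\sqrt{2})$. Differentiating the generating function gives $\mathbb{E}\,|\sigma_n| = n t^\star \coth(t^\star) = \sqrt{2}\,t^\star\,n$ (using $\cosh(t^\star)=\sqrt{2}$) and $\var(|\sigma_n|) = (\sqrt{2}-t^\star)t^\star\,n = \Theta(n)$, so Chebyshev gives $|\sigma_n| = \sqrt{2}\,t^\star\,n \pm o(n)$ asymptotically almost surely.

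The main tool for transferring properties to $\Gamma_n$ is a first-moment estimate in an unusual form. For any set $\mathcal{E}$ of walks from $\vz$ to $\vo$ and any $s > 0$, the elementary bound $\mathbb{P}(X_1 + \cdots + X_L \leq s) \leq s^L/L!$ for $L$ independent $\mathrm{Exp}(1)$ variables gives
\begin{equation*}
\mathbb{E}\bigl[\#\{w \in \mathcal{E} : \mathrm{weight}(w) \leq s\}\bigr] \leq \sum_{w \in \mathcal{E}} \frac{s^{|w|}}{|w|!},
\end{equation*}
which at $s = t^\star$ equals exactly $\mathbb{P}(\sigma_n \in \mathcal{E})$. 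To bridge the gap that $T_n$ is typically not $\leq t^\star$ but only $\leq t^\star + O(1/n)$, I would invoke $\|T_n-t^\star\|_1 = O(1/n)$ from Theorem~\ref{thm:unorientedFPP}, which via Markov gives $\mathbb{P}(T_n > t^\star + K/n) = O(1/K)$ for every $K > 0$. Restricting $\mathcal{E}$ to walks of length at most $Cn$, the change-of-measure factor $(1 + K/(nt^\star))^{Cn}$ is bounded by $e^{CK/t^\star}$, whence
\begin{equation*}
\mathbb{P}(\Gamma_n \in \mathcal{E}) \leq \mathbb{P}(|\Gamma_n| > Cn) + O(1/K) + e^{CK/t^\star}\, \mathbb{P}(\sigma_n \in \mathcal{E}).
\end{equation*}
Letting $n\to\infty$ with $C, K$ fixed and then $K\to\infty$, every event with $\mathbb{P}(\sigma_n\in\mathcal{E})\to 0$ satisfies $\mathbb{P}(\Gamma_n\in\mathcal{E})\to 0$, which is the statement of the theorem.

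The preliminary length bound $\mathbb{P}(|\Gamma_n| > Cn) \to 0$ for $C > \sqrt{2}\,t^\star$ is obtained by applying the first-moment estimate without length truncation: the full sum $\sum_L W_L (t^\star + K/n)^L/L! = \sinh(t^\star + K/n)^n$ converges to $e^{\sqrt{2}K}$ and the tail $\sum_{L > Cn}$ is $e^{-\Omega(n)}$ by Chernoff on the length distribution of $\sigma_n$. The explicit length claim then follows by applying the transfer step to the $\sigma_n$-concentration event $\{||w| - \sqrt{2}\,t^\star n| > \delta n\}$, and the general a.a.s.\ statement follows verbatim from the displayed inequality above.

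\textbf{Main obstacle.} The whole scheme hinges crucially on the sharp quantitative rate $\|T_n - t^\star\|_1 = O(1/n)$ supplied by Theorem~\ref{thm:unorientedFPP}: with a weaker rate such as $o(1)$, the factor $(1+K/(nt^\star))^{Cn}$ in the change of measure would blow up and the first-moment estimate could not absorb it. Given that quantitative input, the rest is essentially a clean application of the first-moment method with a suitable change of measure, and its verification is largely routine.
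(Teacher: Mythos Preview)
Your proposal is correct and follows the same skeleton as the paper's proof: both arguments rest on the first-moment bound
\[
\mathbb{P}(\Gamma_n\in A)\ \le\ \mathbb{P}\Bigl(T_n>\fpptime+\tfrac{c}{n}\Bigr)\ +\ \sum_{w\in A}\frac{(\fpptime+c/n)^{|w|}}{|w|!},
\]
the identification of $\sum_{w\in A}\fpptime^{|w|}/|w|!$ with $\mathbb{P}(\sigma_\fpptime\in A\mid y(\fpptime)=\vo)$, and the $O(1/n)$ rate from Theorem~\ref{thm:unorientedFPP} to make $\mathbb{P}(T_n>\fpptime+c/n)\to 0$ as $c\to\infty$. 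The one genuine technical divergence is in how the perturbation from $\fpptime$ to $\fpptime+c/n$ is absorbed. The paper applies Cauchy--Schwarz in the form
\[
\sum_{w\in A}\frac{(\fpptime+c/n)^{|w|}}{|w|!}\ \le\ \Bigl(\sum_{w\in A}\frac{\fpptime^{|w|}}{|w|!}\Bigr)^{1/2}\Bigl(\sum_{w}\frac{(\fpptime(1+c/(n\fpptime))^2)^{|w|}}{|w|!}\Bigr)^{1/2},
\]
whose second factor equals $m\bigl(\vo,\fpptime(1+c/(n\fpptime))^2\bigr)^{1/2}=O(1)$, so no length truncation is ever needed and the final bound is $O\bigl(\sqrt{\mathbb{P}(\sigma_\fpptime\in A)}\bigr)$. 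You instead truncate to $|w|\le Cn$ and bound the ratio pointwise by $(1+K/(n\fpptime))^{Cn}\le e^{CK/\fpptime}$, which forces you to establish $\mathbb{P}(|\Gamma_n|>Cn)\to 0$ as a separate preliminary step. Your route is slightly longer but more elementary; the paper's Cauchy--Schwarz trick is a one-line shortcut that makes the preliminary length bound unnecessary. Both are valid, and your diagnosis that the $O(1/n)$ rate for $T_n$ is the indispensable input is exactly right.
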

In applying Theorem \ref{thm:sigmamin}, it is helpful to note that each coordinate of a simple random walk on $\hc{n}$ with rate $n$ is an independent simple random walk on $\{0, 1\}$ with rate $1$.

The remainder of the paper will be structured in the following way: In Section 2 we define the BTP and describe our stochastical sandwiching of Richardson's model. At the end of this section, we give an outline of the proof of Theorem \ref{thm:unorientedFPP}. This proof is divided into three steps, which are shown in Sections \ref{sec:proofuncontestedBTP}, \ref{sec:proofBSestimates} and \ref{sec:bootstrap} respectively. Lastly, in Section \ref{sec:proofofsigmamin} we give a short proof of Theorem \ref{thm:sigmamin} based on ideas from the preceding section.

\section{Richardson's model, the BTP, and uncontested particles}

We first give an overview of the technique used by Durrett to obtain the lower bound on $T_n$ in \cite{fp93}. To accommodate Theorem \ref{thm:uncontestedBTP} below, we present this technique in terms of a general graph $G$ rather than just the hypercube. We remark that though Durrett only defined the branching translation process for the hypercube, the process can be extended to a general graph unambiguously. We let $v_0$ denote a fixed vertex in $G$. For simplicity, we will assume that $G$ is finite, connected and simple.

The \emph{branching translation process} (BTP), as introduced by Durrett, is a branching process on $G$ defined in the following way: At time $0$ we place a particle at $v_0$. After this, each existing particle generates offspring independently at rate equal to the degree of the vertex it is placed at. Each offspring is then placed with uniform probability at any neighboring vertex. Equivalently, each existing particle generates offspring at each neighboring vertex independently with rate $1$. For a fixed $G$ and fixed location of the first particle $v_0 \in G$, we let $Z(v, t)$ denote the  number of particles at vertex $v$ at time $t$ in the BTP (originating at $v_0$) and define $m(v, t) = \mathbb{E} Z(v, t)$. One can observe that $\{Z(\cdot, t)\}_{t\geq 0}$ is a Markov process with the initial value $Z(v, 0) = \delta_{v,v_0}$ and where, for each vertex $v$, the transition $\{Z(\cdot) \rightarrow Z(\cdot)+\delta_{\cdot,v}\}$ occurs at rate $\sum_{w\in N(v)} Z(w)$ where $N(v)$ denotes the neighborhood of $v$. It can be noted that in \cite{fp93}, the BTP was formally defined as this Markov process. However, this way to describe the states contains an insufficient amount of information for our applications since there is no way to discern ancestry. We will return to the problem of formally defining the state space of the BTP in Section \ref{sec:proofuncontestedBTP}. For now, the reader not satisfied with the informal definition of the BTP given here is free to consider any state space in which the particles can be individually identified and for each particle except the first, it is possible to determine its parent.

Below, we will use the terms ancestor and descendant of a particle to denote the natural partial order of particles generated by the BTP. For convenience, we use the convention that a particle is both an ancestor and a descendant of itself. We will sometimes write $x \geq y$ to denote that $x$ is a descendant of $y$, and $x \leq y$ to denote that $x$ is an ancestor of $y$. The terms parent and child are defined in the natural way. In order to indicate the location of a child of a particle $x$, we will sometimes use the term $e$-child of $x$ to denote a child of $x$ which at the time of its birth was displaced along an edge $e$. We define the \emph{ancestral line} of a particle $x$ as the ordered set of all ancestors of $x$ (including $x$ itself). If $\sigma$ is the path obtained by following the locations of the vertices along the ancestral line of a particle $x$, then we say that the ancestral line of $x$ \emph{follows $\sigma$}, and we say that the ancestral line of $x$ is \emph{simple} if this path is simple. In certain parts of our proof we will need to consider BTPs where the location of the initial particle can vary. In that case, we will refer to a BTP where the original particle is placed at $v$ as the BTP \emph{originating at $v$}.

As pointed out in \cite{fp93}, the BTP stochastically dominates Richardson's model in the sense that, for a common starting vertex $v_0$, the models can be coupled in such a way that $R(v, t) \leq Z(v, t)$ for all $v\in G$ and $t\geq 0$. This is clear from a comparison of the transition rates of $Z$ and $R$. However, for our applications we need to consider this relation more closely. To this end, we imagine that we partition the particles in the BTP into two sets, which we call the set of \emph{alive} particles and the set of \emph{ghosts}. We stress that the state of a particle is decided at the time of its birth, and is then never changed. The original particle is placed in the set of alive particles. After this, whenever a new particle is born it is placed in the set of ghosts if its parent is a ghost or if its location is already occupied by an alive particle, and placed in the set of alive particles otherwise. Clearly, the subprocess of the BTP consisting of all alive particles initially contains one particle, located at $v_0$, and it is straightforward to see that the rate at which alive particles are born at a given vertex $v$ equals the number of adjacent vertices that contain alive particles if $v$ does not currently contain an alive particle, and $0$ if it does. As this is the same transition rate as for the corresponding transition in Richardson's model, we can consider Richardson's model as the subprocess of the BTP consisting of all alive particles. In a sense, for an observer not able to see the ghosts, the BTP will look like Richardson's model. Hence, with this coupling, the time at which a vertex gets infected is equal to one of the arrival times at the corresponding vertex in the full BTP, though not necessarily the first. We may here note that as at most one particle can be alive at each vertex, we can interpret $R(v, t)$ as the number of alive particles at $v$ at time $t$.

A simplified version of the proof of the lower bound on $T_n$ in \cite{fp93} can now be summarized as follows: Consider a BTP on $\hc{n}$ originating at $\vz$. Since the BTP dominates Richardson's model it suffices to show that with probability $1-o(1)$, no particle occupies $\vo$ at time $\ln\left(1+\sqrt{2}\right)-\varepsilon$ for all $\varepsilon>0$ fixed. This is shown by a first moment method. It follows from standard methods in the theory of continuous-time Markov chains that $m(v, t)$ is the unique solution to the initial value problem
\begin{equation}\label{eq:DEm}
\begin{split}
\frac{d}{dt} m(v, t) &= \sum_{w \in N(v)} m(w, t), \; t>0\\
m(v, 0) &= \delta_{v, v_0}.
\end{split}
\end{equation}
In the case where $G=\hc{n}$ and $v_0=\vz$, it is straightforward to check that the solution to \eqref{eq:DEm} is
\begin{equation}\label{eq:mvthypercube}
m(v, t) = \left(\sinh t\right)^{\abs{v}} \left(\cosh t \right)^{n-\abs{v}}
\end{equation}
and hence $m(\vo, t) = \left(\sinh t\right)^n$. Clearly, this tends to $0$ as $n\rightarrow \infty$ for any $t < \sinh^{-1} 1 = \ln\left(1+\sqrt{2}\right)$, as desired.

Inspired by the coupling between Richardson's model and the BTP as above, we introduce the notion of a particle being uncontested. For a particle $x$ in a BTP, we let $c(x)$ denote the number of pairs of distinct particles $y, z$ such that
\begin{itemize}
\item $y$ is an ancestor of $x$
\item $y$ and $z$ occupy the same vertex
\item $z$ was born before $y$.
\end{itemize}
Note that, according to our definition of ancestor, it is allowed for $y$ to be equal to $x$. We let $a(x)$ denote the number of such pairs where $z$ is an ancestor of $x$, and let $b(x)$ denote the number of pairs where $z$ is not an ancestor of $x$. Clearly $a(x)+b(x)=c(x)$. We say that a particle $x$ is \emph{uncontested} if $c(x)=0$.

\begin{lemma}\label{lemma:basicprop}
We have the following properties:
\begin{enumerate}[label=\roman*)]
\item $a(x)=0$ if and only if the ancestral line of $x$ is simple
\item if a particle is uncontested, then it is the first particle to be born at its location
\item if a particle is uncontested, then it is alive.
\end{enumerate}
\end{lemma}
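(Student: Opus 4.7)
The plan is to verify each of (i), (ii), (iii) directly from the definitions, with a short induction along the ancestral line as the only nontrivial step.

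For (i), I would observe that every pair counted by $a(x)$ consists of two distinct ancestors $y, z$ of $x$ lying on the same vertex with $z$ born before $y$, and conversely any two distinct ancestors of $x$ sharing a vertex produce exactly one such pair (taking $z$ to be whichever was born earlier). Thus $a(x) = 0$ if and only if no two distinct ancestors of $x$ occupy a common vertex, which is the same as saying the sequence of locations along the ancestral line of $x$ has no repetition, i.e.\ is a simple path.

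For (ii), I would just apply the definition of $c(x)$ with $y = x$, which is legitimate under the convention that $x$ is an ancestor of itself. If some particle $z \ne x$ at the same location as $x$ is born strictly before $x$, then the pair $(x,z)$ contributes $1$ to $c(x)$, contradicting $c(x)=0$. So no particle precedes $x$ at its vertex.

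For (iii), the natural approach is induction along the ancestral line of $x$ in birth order, showing that every ancestor is alive. The root $v_0$ is alive by fiat. For the inductive step, let $y$ be a non-root ancestor of $x$, born at some vertex $v$, with parent $y'$. Since $y'$ is itself an ancestor of $x$, the inductive hypothesis gives that $y'$ is alive; consequently $y$ can fail to be alive only if, at the moment of $y$'s birth, some alive particle $z$ already occupies $v$. But alive particles are immobile and persistent, so such a $z$ was born at $v$ strictly before $y$, and the pair $(y, z)$ then satisfies every clause in the definition of $c(x)$ -- $y$ is an ancestor of $x$, $y$ and $z$ share a vertex, and $z$ precedes $y$ in birth order -- contradicting $c(x) = 0$. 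Hence $y$ is alive, and iterating to $y = x$ gives the claim.

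The one subtlety worth flagging is in (iii): we need to invoke $c(x)=0$ at each step, not merely $c(y)=0$ for some intermediate $y$. This is automatic because an ancestor of an ancestor of $x$ is still an ancestor of $x$, so any contesting pair at an intermediate $y$ on the line also contributes to $c(x)$. Beyond that bookkeeping point, each item reduces to direct unpacking of the definitions, so I do not anticipate a substantial obstacle.
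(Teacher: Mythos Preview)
Your proposal is correct and follows essentially the same approach as the paper. The only cosmetic difference is in (iii): the paper argues by contrapositive---assume $x$ is a ghost, locate the earliest ghost ancestor $y$, and observe that the pair $(y,z)$ with $z$ the alive particle already at $y$'s vertex is counted in $c(x)$---whereas you phrase the same idea as a forward induction along the ancestral line; the two are logically identical.
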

\begin{proof}
$i)$ This is obvious. $ii)$ If some particle $z$ was born before $x$ at a vertex, then the pair $(x, z)$ is counted in $c(x)$. $iii)$ For any ghost $x$ in the BTP, there must exist an earliest ancestor $y$ which is a ghost. As the original particle is, by definition, alive, $y$ must have a parent in the BTP. As the parent of $y$ is alive but $y$ is a ghost, the vertex occupied by $y$ must already have been occupied by some alive particle $z$ at the time of birth of $y$. The pair $(y, z)$ is then counted in $c(x)$.
\end{proof}

The third property is of particular interest as it allows us to express a lower bound on Richardson's model in terms of the BTP. Letting $Z_k(v, t)$ denote the number of particles $x$ at vertex $v$ at time $t$ such that $c(x)=k$, we conclude that 
\begin{equation}\label{eq:stochdom}
Z_0 \overset{d}{\leq} \text{Richardson's model} \overset{d}{\leq} Z,
\end{equation}
and with the proposed coupling between BTP and Richardson's model above we even have $Z_0 \leq R \leq Z$. However, it should be noted that, unlike $Z$ and $R$, there is no reason why $Z_0(v)$ could not remain $0$ forever. In fact, with the exception of the case where $G$ is a chain of length $1$, this occurs with positive probability. In order to see this, one can observe that if the first particle to arrive at a vertex is contested, which occurs with positive probability, then this particle will prevent all subsequent particles from being uncontested. On the other hand, in the event that $Z_0(v)$ is eventually non-zero, it follows from the second and third properties in Lemma \ref{lemma:basicprop} that the uncontested particle must have been the first particle at $v$ and that this particle must have been alive. Hence, either $Z_0(v)$ remains $0$ forever, or the time of the first arrival at $v$ coincides in all three models.

\subsection{Outline of proof of Theorem \ref{thm:unorientedFPP} }
For each vertex $v$ and $t\geq 0$, we define $A(v, t)$ and $B(v, t)$ as the expected value of $\sum_x a(x)$ and $\sum_x b(x)$ respectively, where the sums goes over all particles at vertex $v$ at time $t$ in the BTP. We similarly define $S(v, t)$ as the expected number of particles at vertex $v$ at time $t$ with simple ancestral lines, that is, the expected number of particles $x$ at $v$ at time $t$ such that $a(x)=0$. The core of finding upper bounds on the first-passage time using the BTP is the following theorem, which will be shown in Section \ref{sec:proofuncontestedBTP}:
\begin{theorem}\label{thm:uncontestedBTP}
Let $G$ be a finite connected simple graph.
Consider the BTP on $G$ originating at $v_0$, and let $Z_0(v, t)$, $B(v, t)$ and $S(v, t)$ be as above. Then, for any vertex $v$ and $t\geq 0$ we have
\begin{equation}\label{eq:uncontestedBTP}
\mathbb{P}\left(Z_0(v, t) > 0\right) \geq S(v, t) e^{- \frac{B(v, t)}{S(v, t)}}.
\end{equation}
\end{theorem}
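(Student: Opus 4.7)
The plan is to exploit three features of the problem. First, combining parts $ii)$ and $iii)$ of Lemma \ref{lemma:basicprop}, an uncontested particle at $v$ must be the unique first particle born at $v$, so $Z_0(v,t) \in \{0,1\}$ and $\mathbb{P}(Z_0(v,t)>0) = \mathbb{E}[Z_0(v,t)]$. It thus suffices to prove the seemingly weaker inequality $\mathbb{E}[Z_0(v,t)] \geq S(v,t)\,e^{-B(v,t)/S(v,t)}$.

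Next, I would apply a Campbell-type formula for the BTP to rewrite $\mathbb{E}[Z_0]$, $S$, and $B$ as integrals over simple ancestral lines. Each simple-line particle at $v$ at time $t$ is parametrized by a simple path $\sigma = (u_0 = v_0, u_1, \ldots, u_k = v)$ and birth times $0 = t_0 < t_1 < \cdots < t_k \leq t$, and since each ancestor spawns its next ancestor at Poisson rate $1$ along the corresponding edge, this configuration appears with intensity $dt_1 \cdots dt_k$. Hence
\begin{align*}
S(v,t) &= \sum_\sigma \int_{0 < t_1 < \cdots < t_k \leq t} dt_1 \cdots dt_k, \\
\mathbb{E}[Z_0(v,t)] &= \sum_\sigma \int \mathbb{P}\bigl(b(x)=0 \,\big|\, \sigma, t_\ast\bigr)\, dt_1 \cdots dt_k, \\
B(v,t) &= \sum_\sigma \int \mathbb{E}\bigl[b(x) \,\big|\, \sigma, t_\ast\bigr]\, dt_1 \cdots dt_k,
\end{align*}
where $x$ is the simple-line particle carrying this ancestral line, so that $a(x)=0$ and therefore being uncontested is equivalent to $b(x)=0$.

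The heart of the argument is a pointwise bound
\[
\mathbb{P}\bigl(b(x)=0 \,\big|\, \sigma, t_\ast\bigr) \;\geq\; \exp\bigl(-\mathbb{E}[b(x) \mid \sigma, t_\ast]\bigr)
\]
for each fixed ancestral line. My plan is to argue, using the branching property of the BTP together with Palm theory, that conditional on the ancestral line of $x$ the spawns of the ancestors $y_0, \ldots, y_k$ that are not themselves ancestral events form an inhomogeneous Poisson process on (time, spawn location), and each such non-ancestral spawn independently initiates a copy of the BTP originating at the spawn vertex at the spawn time. The variable $b(x)$ is then precisely the total contribution of these independent sub-BTPs (each counting descendants born at some $u_i$ before $t_i$), and so conditional on $(\sigma, t_\ast)$ it is compound Poisson. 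Its probability generating function has logarithm $\int (\mathbb{E}[z^{N_\pi}] - 1)\, \rho(d\pi)$, a non-negatively weighted integral of p.g.f.'s of non-negative integer-valued variables and hence convex on $[0,1]$; since it vanishes at $z=1$ with derivative $\mathbb{E}[b(x) \mid \sigma, t_\ast]$ there, the tangent-line inequality at $z=0$ yields the claim.

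Finally, substituting this pointwise bound into the expression for $\mathbb{E}[Z_0(v,t)]$ and applying Jensen's inequality to the convex function $y \mapsto e^{-y}$ with respect to the probability measure $dt_1 \cdots dt_k / S(v,t)$ on the disjoint union of simplices indexed by simple paths yields $\mathbb{E}[Z_0(v,t)] \geq S(v,t) \exp(-B(v,t)/S(v,t))$, which combined with the first observation proves the theorem.

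The hard part will be making the Palm/branching decomposition rigorous, namely formally defining the conditioning on a specific ancestral line (a null event) and verifying that the non-ancestral part of the BTP really does split into an independent Poisson field of sub-BTPs. The remaining ingredients (Campbell formula, log-p.g.f.\ convexity, Jensen) are standard. The excerpt already flags that a more careful BTP state space will be introduced in Section \ref{sec:proofuncontestedBTP}, which should underpin these manipulations.
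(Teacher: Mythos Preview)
Your approach is correct and essentially identical to the paper's: Palm decomposition over simple ancestral lines (the paper's Lemma~\ref{lemma:palmBTP}), the pointwise bound $\mathbb{P}(b=0)\geq e^{-\mathbb{E}[b]}$ from the Poisson structure of off-line spawns (the paper's Lemma~\ref{lemma:superpoisson}, argued there via Poisson thinning of first-generation particles rather than compound-Poisson log-p.g.f.\ convexity), and a convexity step (the paper uses the tangent line $e^{-r}\geq e^{-r_0}(1+r_0-r)$ summed over paths and then optimized at $r_0=B/S$, which is exactly your Jensen argument). One small slip: $B(v,t)$ also receives contributions from particles with non-simple ancestral lines, so your displayed identity for $B$ should be $\geq$ rather than $=$---but since this only makes the exponent more negative, the final bound $S\,e^{-B/S}$ is unaffected.
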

In essence, Theorem \ref{thm:uncontestedBTP} states that if, at a time $t$, the expected number of particles with simple ancestral line at $v$ in the BTP is bounded away from $0$, and if $B(v, t)$ is bounded, then with probability bounded away from $0$ there is a particle at $v$ at this time such that $a(x)=b(x)=0$. Using the relation between the BTP and Richardson's model in \eqref{eq:stochdom}, this immediately implies a lower bound on the probability that the first-passage time from $v_0$ to $v$ in $G$ is at most $t$. We remark that while the left-hand side of \eqref{eq:uncontestedBTP} certainly is increasing in $t$, the right-hand side is generally not, and instead typically attains a maximum for $t$ such that $m(v, t) \approx 1$.

We now apply this result to the hypercube. We let $G=\hc{n}$, $v_0=\vz$ and $t=\fpptime \defeq \ln\left(1+\sqrt{2}\right)$. In this case, the quantities $A(\vo, \fpptime)$ and $B\left(\vo, \fpptime\right)$ can be expressed analytically in a similar manner to the variance calculations for the BTP in \cite{fp93}. This will be done in Section \ref{sec:proofBSestimates}. The result of this can be summarized as follows: 
\begin{proposition} \label{prop:BSestimates} For $\fpptime=\ln\left(1+\sqrt{2}\right)$, we have
\begin{align}
A\left(\vo, \fpptime\right) &= \frac{ \fpptime}{\sqrt{2}} + o(1) = 0.623\dots + o(1)\\
B\left(\vo, \fpptime\right) &= \fpptime + \frac{1}{3-2\sqrt{2}} + o(1) = 6.709\dots + o(1).
\end{align}
\end{proposition}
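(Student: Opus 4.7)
The plan is to apply the Campbell/Palm formula for the BTP to express both $A(\vo,\vartheta)$ and $B(\vo,\vartheta)$ as explicit multiple integrals, exploit the coordinate-product structure $m(v,t) = \sinh^{\abs{v}}(t)\cosh^{n-\abs{v}}(t)$ on the hypercube to collapse sums over vertices into closed-form $n$-th powers of hyperbolic expressions (via the identity $\sum_u m(u,a)\,m(\vo - u,b) = \sinh^n(a+b)$ and its derivatives), and finally extract the large-$n$ asymptotics at $t = \vartheta$ by Laplace's method, using $\sinh\vartheta = 1$ and $\cosh\vartheta = \sqrt 2$.

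For $A(\vo,\vartheta)$: since $a(x)$ counts self-intersection pairs of the ancestral walk of $x$, and each walk $\sigma$ of length $k$ from $\vz$ to $\vo$ contributes $t^k/k!$ to the BTP intensity, I would decompose each walk around a marked self-intersection ($\sigma_i = \sigma_j = u$) into prefix, loop, and suffix. Summing over the shared vertex $u$ via the product identity reduces $A(\vo,t)$ to a double integral of the form $n^2 \int_0^t (t-u)\,\sinh(u)\,\cosh^{n-1}(u)\,\sinh^{n-1}(t-u)\,\cosh(t-u)\,du$ plus a negligible root-contribution term. At $t = \vartheta$ the log-derivative $-\frac{d}{du}\log[\cosh(u)\sinh(\vartheta-u)]|_{u=0} = \coth\vartheta = \sqrt 2$ concentrates the mass at $u = 0$, and Laplace's method yields $\vartheta/\sqrt 2 + o(1)$.

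For $B(\vo,\vartheta)$: Palm analysis on the ``colliding'' particle $y$, combined with Slivnyak's property applied to the rate-$1$ Poisson birth processes at each ancestor of $y$, gives the clean formula
\[
\mathbb{E}^y[N(y)] = \sum_{i=0}^{k-1} m(\sigma_i - w,\, s - \tau_i) - a'(y)
\]
for the expected number of non-ancestor particles of $y$ at $\sigma(y) = w$ born by time $s = \tau_k$, where $y$ has ancestry $\sigma_0 = \vz,\sigma_1,\ldots,\sigma_k = w$ at birth times $\tau_0 = 0,\tau_1,\ldots,\tau_k = s$ and $a'(y) = \abs{\{i < k : \sigma_i = w\}}$. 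Multiplying by $\mathbb{E}^y[M(y)] = m(\vo - w, t-s)$ and summing over $y$ by Campbell, $B$ decomposes as $B_0 + \sum_{i \geq 1} B_i - A$, indexed by the marked ancestor position $i$. Each $B_i$, after splitting the walk into a prefix of length $i$ and a suffix of length $\geq 1$ and then summing via the hypercube product identities over both the intermediate vertex $u = \sigma_i$ and the terminal vertex $w$, reduces to a double integral whose integrand is of the form $G(\alpha)^{n-c} H(\alpha, r)$, where $\alpha = s - r$ and $G(\alpha) = \sinh^2\alpha\,\cosh(\vartheta - \alpha) + \cosh^2\alpha\,\sinh(\vartheta - \alpha) \leq 1$ with equality only at $\alpha \in \{0, \vartheta\}$.

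The main obstacle will be isolating and combining the asymptotic contributions from the various boundary regions. The boundary $\alpha \to 0^+$ has exponential rate $\sqrt 2$, while $\alpha \to \vartheta^-$ has rate $2(\sqrt 2 - 1) = 2/(1+\sqrt 2)$; each $B_i$ as well as $B_0$ picks up contributions from both. The stated answer $\vartheta + (1+\sqrt 2)^2 + o(1)$ emerges only after all such contributions are combined and $A$ is subtracted: the $\vartheta$-linear term comes from an outer integration that survives the Laplace reduction at the $\alpha \to 0$ boundary of the $\sum_{i \geq 1} B_i$ piece, while the constant $(1+\sqrt 2)^2 = 1/(3-2\sqrt 2)$ is essentially the squared reciprocal of the decay rate at the $\alpha \to \vartheta^-$ boundary.
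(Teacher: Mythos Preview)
Your high-level plan is correct and closely mirrors the paper's argument: both use the Palm/Campbell formula for the BTP (the paper's Lemma~\ref{lemma:palmBTP}), exploit the coordinate-product structure of $m(v,t)$ on $\hc{n}$ together with the convolution identity $\sum_w m(w,s)m(v-w,t)=m(v,s+t)$ (Lemma~\ref{lemma:conv}), and then extract the limit by a Laplace-type argument (dominated convergence in the paper). You also correctly identify the key function $G(\alpha)=\sinh^2\alpha\cosh(\vartheta-\alpha)+\cosh^2\alpha\sinh(\vartheta-\alpha)$, which the paper writes as $e^{f(t)}=\tfrac12 e^{\vartheta-t}\cosh 2t-\tfrac12 e^{-\vartheta+t}$, and the two boundary rates $-f'(0)=\sqrt 2$ and $f'(\vartheta)=2(\sqrt 2-1)$.

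For $A$ your approach is essentially identical to the paper's, though your displayed integrand $n^2\sinh(u)\cosh^{n-1}(u)\sinh^{n-1}(\vartheta-u)\cosh(\vartheta-u)$ does not match the paper's $\bigl(n+n(n-1)\tanh^2 t\bigr)[\cosh t\,\sinh(\vartheta-t)]^n$. The latter arises because the loop contribution is $\sum_{i,j}m(e_i+e_j,t)=\tfrac{d^2}{dt^2}\cosh^n t$, not a product of first derivatives. Both expressions happen to yield the same limit $\vartheta/\sqrt2$ under Laplace at $u=0$, but you should recheck your loop term.

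For $B$ your organization differs from the paper's in a genuine way. The paper computes $A+B$ in one stroke by introducing the last common ancestor $l$ of the colliding pair $(y,z)$ and its distinguished child $c$ (their Graphs~2 and~3 in Proposition~\ref{prop:ABformulas}); after one application of Lemma~\ref{lemma:conv} and a binomial sum over $|\Delta|$ this gives directly the single expression $\tfrac12\int_0^\vartheta(\vartheta-t)\,\tfrac{d^2}{dt^2}G(t)^n\,dt$, whose two boundary contributions are then read off. Your ancestor-indexed decomposition $B=\sum_{i\ge 0}B_i-A$ is also valid, and after the $u$-convolution does collapse to the same $\sum_\Delta m(\vo-\Delta,\vartheta-\alpha)m(\Delta,\alpha)^2=G(\alpha)^n$; but it leaves an extra integration layer (over $r$), the separate $B_0$ term, and the explicit $-A$ subtraction, so combining the boundary pieces into the closed form $\vartheta+(1+\sqrt2)^2$ requires more bookkeeping than you indicate. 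The paper's last-common-ancestor picture is the cleaner route here.
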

In order to bound $S(\vo, \fpptime)$, we observe that $A(v, t)$ is an upper bound on the expected number of particles at $v$ at time $t$ whose ancestral lines are not simple. This follows directly from the definition of $A(v, t)$ as $a(x)$ is an upper bound on the indicator function for the event that $a(x)$ is non-zero. We conclude that
\begin{equation}
m(v, t) - A(v, t) \leq S(v, t) \leq m(v, t),
\end{equation}
and in particular, $1-\frac{\fpptime}{\sqrt{2}} - o(1) = 0.376\dots - o(1) \leq S(\vo, \fpptime) \leq 1$.

Plugging these values into Theorem \ref{thm:uncontestedBTP} we conclude the following:
\begin{corollary}\label{cor:Tnnotzero}
Let $T_n$ denote the first-passage time from $\vz$ to $\vo$ in $\hc{n}$ and let $\fpptime=\ln\left(1+\sqrt{2}\right)$. There exists a constant $\varepsilon>0$ such that $\mathbb{P}(T_n \leq \fpptime)\geq \varepsilon$ for all $n$, and in particular\newline $\liminf_{n\rightarrow\infty}\mathbb{P}(T_n \leq \fpptime) \geq 6.9\cdot 10^{-9}$.
\end{corollary}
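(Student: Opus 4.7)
The plan is to apply Theorem \ref{thm:uncontestedBTP} with $G = \hc{n}$, $v_0 = \vz$, $v = \vo$, and $t = \fpptime = \ln(1+\sqrt{2})$, then transfer the resulting bound on $Z_0(\vo, \fpptime)$ to $T_n$ via the sandwich $Z_0 \leq R$ described at \eqref{eq:stochdom}. Concretely, since $\mathbb{P}(T_n \leq \fpptime) = \mathbb{P}(R(\vo, \fpptime) > 0) \geq \mathbb{P}(Z_0(\vo, \fpptime) > 0)$, it suffices to evaluate the right-hand side of \eqref{eq:uncontestedBTP} for these parameters.

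The three ingredients needed are $m(\vo, \fpptime)$, $A(\vo, \fpptime)$, and $B(\vo, \fpptime)$. First I would note the fortunate identity $\sinh \fpptime = 1$, so that \eqref{eq:mvthypercube} gives $m(\vo, \fpptime) = 1$ for every $n$. Combined with the bound $S(v, t) \geq m(v, t) - A(v, t)$ noted just before the corollary, this yields
\begin{equation*}
S(\vo, \fpptime) \geq 1 - \tfrac{\fpptime}{\sqrt{2}} - o(1) = 0.376\dots - o(1),
\end{equation*}
using Proposition \ref{prop:BSestimates}. The same proposition gives $B(\vo, \fpptime) \leq \fpptime + \tfrac{1}{3-2\sqrt{2}} + o(1) = 6.709\dots + o(1)$.

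Next I would observe that the lower bound $s \mapsto s \, e^{-B/s}$ is monotonically increasing in $s$ for fixed $B > 0$ (its derivative is $e^{-B/s}(1 + B/s)>0$), so the worst case within the admissible range of $S(\vo, \fpptime)$ is attained at its lower endpoint. Plugging in the limiting numerical values gives
\begin{equation*}
\bigl(1 - \tfrac{\fpptime}{\sqrt{2}}\bigr) \exp\!\left(-\tfrac{\fpptime + 1/(3-2\sqrt{2})}{1 - \fpptime/\sqrt{2}}\right) \approx 6.9\cdot 10^{-9},
\end{equation*}
which is the liminf claimed in the corollary. The $o(1)$ corrections from Proposition \ref{prop:BSestimates} make this bound valid for all sufficiently large $n$; to cover the remaining finitely many small values of $n$, I would simply note that $\mathbb{P}(T_n \leq \fpptime) > 0$ for each fixed $n$ (any single path has positive probability of having passage time below $\fpptime$), so choosing the uniform constant $\varepsilon$ slightly below the limiting value and then, if necessary, shrinking it further to accommodate the initial $n$ produces the desired constant.

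No step here is a genuine obstacle: the heavy lifting is packaged into Theorem \ref{thm:uncontestedBTP} and Proposition \ref{prop:BSestimates}. The mildly delicate point is just to be careful about monotonicity in $S$ (since we only have a lower bound on $S(\vo, \fpptime)$, not its exact value) and to separate the asymptotic statement from the uniform-in-$n$ statement. Everything else is substitution and arithmetic.
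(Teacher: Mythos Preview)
Your proposal is correct and follows essentially the same approach as the paper: apply Theorem \ref{thm:uncontestedBTP} together with Proposition \ref{prop:BSestimates} and the lower bound $S(\vo,\fpptime)\geq m(\vo,\fpptime)-A(\vo,\fpptime)$ to obtain the asymptotic numerical bound, and then observe that $\mathbb{P}(T_n\leq\fpptime)>0$ for each fixed $n$ to deduce the uniform constant. Your explicit check of monotonicity of $s\mapsto s\,e^{-B/s}$ is a useful detail that the paper leaves implicit.
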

\begin{proof}
The asymptotic lower bound on $\mathbb{P}(T_n \leq \fpptime)$ is obtained directly from Theorem \ref{thm:uncontestedBTP} and Proposition \ref{prop:BSestimates}. From this, the uniform bound follows by the observation that $\mathbb{P}(T_n \leq \fpptime)$ is non-zero for all $n$.
\end{proof}

For our applications, we will need a more technical version of Corollary \ref{cor:Tnnotzero}, Proposition \ref{prop:H2prime}, but other than that we are done with the BTP given this result. It may seem like Corollary \ref{cor:Tnnotzero} is far from our claimed result of convergence in $L^p$-norm, but given this result there are in fact a number of different ways to show that $T_n$ converges to $\fpptime$ at least in probability, using the self-similar structure of the hypercube. One could for instance apply the ideas by Bollob\'{a}s and Kohayakawa in \cite{bk97}. In this paper we will instead apply a bootstrapping argument similar to one given in \cite{hm13}, which has the benefit of letting us get good bounds on the $L^p$-norms of $T_n-\fpptime$. This will be shown in Section \ref{sec:bootstrap}, completing the proof of Theorem \ref{thm:unorientedFPP}.

\section{Proof of Theorem \ref{thm:uncontestedBTP}}\label{sec:proofuncontestedBTP}

Before proceeding with the proof, we need to discuss the parametrization of the BTP more carefully. For a BTP originating at a vertex $v$, a particle is identified by a finite sequence $\{e_1 t_1 e_2 t_2\dots e_k t_k\}$ where $e_1, e_2, \dots, e_k$ are edges forming a path that starts at $v$ and $t_1, t_2, \dots, t_k$ are positive real numbers. The original particle is identified by $\{\}$, the empty sequence. For any other particle $x$, $e_1 e_2 \dots e_k$ denotes the edges along the path followed by the ancestral line of $x$, and if $x_0, x_1, \dots, x_k=x$ are the ancestors of $x$ in ascending order, then for each $1\leq i \leq k$, we have $t_i$ equal to the time from the birth of $x_{i-1}$ to the birth of $x_i$. It is easy to see that such a sequence uniquely defines the location and birth time of $x$. In particular, as, almost surely, no two particles are born at exactly the same time, this means that this representation is unique for each particle in the BTP. Note that this means that the parent of $x=\{e_1 t_1 e_2 t_2\dots e_k t_k\}$ is $\{e_1 t_1 e_2 t_2\dots e_{k-1} t_{k-1}\}$. More generally, the ancestors of $x$ are the prefixes of $x$ of even length. By a BTP originating at a vertex $v$ we formally mean a random set of particles, which is interpreted as the set of all particles that will ever be born in the BTP, and, of course, whose distribution is given according to the transition rates as described above. We remark that this means that the event that a particle $x=\{e_1 t_1 e_2 t_2\dots e_k t_k\}$ exists is interpreted as the event that the original particle has a $e_1$-child at time $t_1$, that this child has an $e_2$-child at time $t_1+t_2$ and so on.

Below will use $\oplus$ to denote concatenation of sequences. For instance, if $y$ is a child of $x$, born a time $t$ after its parent and displaced along the edge $e$, then we may write $y = x \oplus \{e t\}$. For a sequence $a$ and a set of sequences $B$, we define $a \oplus B = \left\{a\oplus b\middle\vert b\in B\right\}$.

It is easy to see that, in a BTP, each vertex can at most contain one uncontested particle, see for instance property $ii)$ in Lemma \ref{lemma:basicprop}. This means that the probability that a vertex $v$ contains an uncontested particle at time $t$ is equal to the expected number of such particles. Hence the conclusion of Theorem \ref{thm:uncontestedBTP} basically states that among the particles at $v$ at time $t$ such that $a(x)=0$, the probability that $b(x)=0$ is on average at least $\exp\left(-\frac{B(v, t)}{S(v, t)}\right)$. In principle, it is possible to show this by considering the conditional distributions of $b(x)$ given the event that the particle $x$ exists in the BTP. However, it is not formally possible by the usual definitions of conditional expectation and conditional distribution to condition on the event that a particle exists in the BTP since the event occurs with probability $0$ and the particle itself is not the output of some well-defined random variable. In order to solve this problem, we need some ideas from Palm theory, and, in particular, the following special case of the Slivnyak-Mecke formula. The proof of this can be found in various text books on point processes. See for instance Corollary 3.2.3 in \cite{sw08}.
\begin{theorem}(Slivnyak-Mecke formula)
Let $\mathbf{T}$ be a Poisson point process on the positive part of the real line with with constant intensity $1$. Let $G$ be a function mapping pairs $(T, t)$ where $T$ is a discrete subset of $\mathbb{R}_+$ and $t\in T$ to non-negative real numbers. Then
\begin{equation}\label{eq:slivnyak-mecke}
\mathbb{E} \sum_{t\in \mathbf{T}} G(\mathbf{T}, t) = \int_0^\infty \mathbb{E} G\left( \mathbf{T} \cup \{t\}, t\right)\,dt.
\end{equation}
\end{theorem}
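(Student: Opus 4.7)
The plan is a direct reduction to a bounded window $[0,L]$ together with the explicit conditional description of a homogeneous Poisson process given its point count. First I would fix $L>0$ and set $\mathbf{T}_L \defeq \mathbf{T}\cap [0,L]$. Using that $N_L \defeq |\mathbf{T}_L|$ is Poisson with mean $L$ and that, conditional on $N_L=n$, the points of $\mathbf{T}_L$ are i.i.d.\ uniform on $[0,L]$, the left-hand side restricted to $[0,L]$ would expand as
\begin{equation*}
\sum_{n\geq 1}e^{-L}\frac{L^n}{n!}\,\mathbb{E}\sum_{i=1}^n G(\{U_1,\dots,U_n\},U_i) \;=\; \sum_{n\geq 1}e^{-L}\frac{L^n}{n!}\cdot n\cdot \mathbb{E}\,G(\{U_1,\dots,U_n\},U_1),
\end{equation*}
where the last equality uses exchangeability of the $U_i$.

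The key rewrite would then be to express the expectation over $U_1$ as $\tfrac{1}{L}\int_0^L \mathbb{E}\,G(\{t,U_2,\dots,U_n\},t)\id t$, absorb the factor $n/L$ into $L^{n-1}/(n-1)!$, and reindex with $m=n-1$. A Tonelli exchange, justified since $G\geq 0$, would produce
\begin{equation*}
\int_0^L \sum_{m\geq 0}e^{-L}\frac{L^m}{m!}\,\mathbb{E}\,G(\{t\}\cup \{U'_1,\dots,U'_m\},t)\id t \;=\; \int_0^L \mathbb{E}\,G(\mathbf{T}_L\cup\{t\},t)\id t,
\end{equation*}
because a $\mathrm{Poisson}(L)$ number of i.i.d.\ uniforms on $[0,L]$ has the same distribution as $\mathbf{T}_L$ and, with probability one, $t$ does not already belong to $\mathbf{T}_L$. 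Finally I would let $L\to\infty$ and invoke monotone convergence on both sides: the sum on the left increases in $L$ to the sum over all of $\mathbf{T}$, and the non-negative integrand on the right extends to all of $[0,\infty)$.

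The only subtle step is justifying the formula for a sufficiently rich class of non-negative measurable $G$. The cleanest route would be to verify it first for functionals of the product form $G(T,t) = \mathbf{1}_A(T)\mathbf{1}_B(t)$ with $A$ a cylinder event depending on only finitely many coordinates of the point process, and then extend by a standard monotone class argument on the space of locally finite subsets of $\mathbb{R}_+$. I do not expect a genuine conceptual obstacle here: all of the probabilistic content is captured by the conditional-uniform description of the Poisson process together with exchangeability of the conditional samples, and the remaining work is measure-theoretic bookkeeping.
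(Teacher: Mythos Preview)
The paper does not prove this statement: it is quoted as a known special case of the Slivnyak--Mecke formula and referred to Corollary~3.2.3 in Schneider--Weil. So there is no in-paper proof to compare against; your argument is the standard textbook route.

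The one point that needs tightening is the limit $L\to\infty$. As written, you have established
\[
\mathbb{E}\sum_{t\in\mathbf{T}_L} G(\mathbf{T}_L,t)\;=\;\int_0^L \mathbb{E}\,G(\mathbf{T}_L\cup\{t\},t)\,dt,
\]
and you then claim both sides increase to the desired quantities. But while the index set $\mathbf{T}_L$ grows with $L$, the summand $G(\mathbf{T}_L,t)$ need not be monotone in $L$, since the first argument changes; the same issue arises in the integrand on the right. The clean fix is precisely what your last paragraph suggests, but it should come \emph{before} the limit step rather than after: first take $G$ whose dependence on the configuration is localised to some fixed window $[0,L_0]$, so that $G(\mathbf{T}_L,\cdot)=G(\mathbf{T},\cdot)$ for all $L\ge L_0$ and both sides are then genuinely nondecreasing in $L$; then extend to general non-negative measurable $G$ by a monotone-class argument. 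Alternatively, one can keep the full process $\mathbf{T}$ in the first argument throughout by conditioning on $\mathbf{T}\cap(L,\infty)$ and using independence of the Poisson process on disjoint sets, which makes the monotonicity immediate. With either reordering your proof is complete.
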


If instead of a Poisson process on $\mathbb{R}_+$, we imagine $\mathbf{T}$ being a random subset of a finite, or even countable set, then we clearly have
\begin{equation}
\mathbb{E} \sum_{t\in \mathbf{T}} G(\mathbf{T}, t) = \sum_t \mathbb{P}\left(t \in \mathbf{T}\right) \mathbb{E}\left[ G(\mathbf{T}, t) \middle\vert t\in \mathbf{T}\right].
\end{equation}
By the standard way to translate this statement, if $\mathbf{T}$ is a Poisson process on $\mathbb{R}_+$ with constant intensity $1$, then we would expect the sum over $t$ to translate to an integral and $\mathbb{P}\left(t \in \mathbf{T}\right)$ to $dt$, the Lebesgue measure on $\mathbb{R}_+$. Hence, the theorem states that if $\mathbf{T}$ is a Poisson process as above, then we should translate $\mathbb{E}\left[ G(\mathbf{T}, t) \middle\vert t\in \mathbf{T}\right]$ to $\mathbb{E} G\left( \mathbf{T} \cup \{t\}, t\right)$, and so we may interpret $\mathbf{T} \cup \{t\}$ as the conditional distribution of $\mathbf{T}$ given $t\in \mathbf{T}$.

The following lemma proves a corresponding result for the BTP. In a similar manner as above, we may interpret the lemma as that, conditioned on the event that a particle $x^{z_1, \dots, z_l}$ exists in the BTP $\mathbf{X}_0$, the conditional distribution of the process is given by $\mathbf{X}^{z_1, \dots, z_l}$, where $x^{z_1, \dots, z_l}$ and $\mathbf{X}^{z_1, \dots, z_l}$ are as defined below. This result may be well-known from the properties of more general processes.
\begin{lemma}\label{lemma:palmBTP}
Let $\sigma$ be a path of length $l\geq 1$. We denote the vertices along the path $v_0, \dots, v_l$ and the edges $\sigma_1, \dots, \sigma_l$. Let $\mathbf{X}_0, \mathbf{X}_1, \dots, \mathbf{X}_l$ be independent branching translation processes where $\mathbf{X}_i$ for $0\leq i \leq l$ is a BTP originating at vertex $v_i$.  Let $f$ be a function taking pairs $(X, x)$, $X$ a realization of a BTP and $x$ a particle in $X$, to non-negative real numbers. Let $V_{\sigma} = V_{\sigma}\left( X \right)$ denote the set of particles at vertex $v_l$ (no matter when they are born) whose ancestral line follows $\sigma$. Then, we have
\begin{equation}
\mathbb{E} \sum_{x \in V_{\sigma} (\mathbf{X}_0)} f(\mathbf{X}_0, x) = \int_0^\infty \dots \int_0^\infty \mathbb{E}\,f(\mathbf{X}^{z_1, \dots, z_l}, x^{z_1, \dots, z_l})\,dz_1\dots\,dz_l
\end{equation}
where
\begin{equation}
\mathbf{X}^{z_1, \dots, z_l} = \mathbf{X}_0 \cup \left(\{\sigma_1 z_1\}\oplus\mathbf{X}_1\right) \cup \left(\{\sigma_1 z_1 \sigma_2 z_2\}\oplus\mathbf{X}_2\right) \cup \dots \cup \left(\{\sigma_1 z_1 \sigma_2 z_2 \dots \sigma_l z_l\}\oplus\mathbf{X}_l\right)
\end{equation}
and $x^{z_1, \dots, z_l}=\{\sigma_1 z_1 \sigma_2 z_2\dots \sigma_l z_l\}$.
\end{lemma}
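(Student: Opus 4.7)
The plan is to prove Lemma~\ref{lemma:palmBTP} by induction on the length $l$ of the path $\sigma$, peeling off one edge at a time from the front and invoking the Slivnyak-Mecke formula at each stage. The structural fact driving the argument is the \emph{branching property} of the BTP: for any particle $p$ at a vertex $w$ and any edge $e=\{w,u\}$ incident to $w$, the birth times of the $e$-children of $p$ form a rate-$1$ Poisson process on $[0,\infty)$, the subtree of descendants of any such child is an independent BTP originating at $u$, and all of these Poisson processes and subtree BTPs (as $p$ and $e$ vary) are mutually independent. This follows directly from the defining construction of the BTP as a collection of independent rate-$1$ Poisson birth processes attached to each (particle, incident edge) pair.

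For the base case $l=1$, the sum $\sum_{x\in V_\sigma(\mathbf{X}_0)} f(\mathbf{X}_0,x)$ is precisely a sum over the $\sigma_1$-children of the original particle, indexed by their birth times, which form a rate-$1$ Poisson process on $[0,\infty)$. Applying Slivnyak-Mecke to this process and using the branching property to identify the subtree rooted at a forced $\sigma_1$-child at time $z_1$ with an independent BTP $\mathbf{X}_1$ originating at $v_1$, we recover the claimed formula immediately.

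For the inductive step, each $x\in V_\sigma(\mathbf{X}_0)$ decomposes uniquely as $x=\{\sigma_1 t_1\}\oplus y$, where $\{\sigma_1 t_1\}$ is a $\sigma_1$-child of the origin in $\mathbf{X}_0$ and $y$ is a particle in the subtree rooted there whose ancestral line within that subtree follows $\sigma_2\dots\sigma_l$. Applying Slivnyak-Mecke to the outer sum over $t_1$ produces an integral $\int_0^\infty dz_1$ under which the subtree of the forced $\sigma_1$-child is an independent BTP $\mathbf{X}'$ originating at $v_1$. Applying the inductive hypothesis to the path $\sigma_2\dots\sigma_l$ in $\mathbf{X}'$ with the function $g(X,y)\defeq f(\mathbf{X}_0\cup(\{\sigma_1 z_1\}\oplus X),\,\{\sigma_1 z_1\}\oplus y)$ then yields the remaining $(l-1)$-fold integral, and rewriting everything with the $\oplus$-notation reassembles exactly $\mathbf{X}^{z_1,\dots,z_l}$ and $x^{z_1,\dots,z_l}$ as in the statement.

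The step I expect to be most delicate is the rigorous justification of applying Slivnyak-Mecke together with the branching property inside an expectation that still depends on the rest of $\mathbf{X}_0$ (and, in the inductive step, on several independent BTPs simultaneously). To handle this cleanly I would first encode the entire BTP as a deterministic functional of the family of independent rate-$1$ Poisson processes indexed by (particle, edge) pairs, constructed inductively across generations. In this encoding, conditioning on the presence of a specific $\sigma_1$-child at time $z_1$ amounts, by the classical Slivnyak-Mecke formula, to adjoining the point $z_1$ to a single one of these Poisson processes; by independence this leaves every other Poisson process---and hence both the rest of $\mathbf{X}_0$ and the newly spawned subtree---in its original unconditional distribution. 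With this setup in place, the induction step reduces to an application of Fubini's theorem and the tower property, and the combinatorial bookkeeping with the $\oplus$-notation is routine.
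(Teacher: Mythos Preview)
Your proposal is correct and follows essentially the same route as the paper: induction on $l$, peeling off the first edge $\sigma_1$ by applying the Slivnyak--Mecke formula to the rate-$1$ Poisson process of $\sigma_1$-children of the root, identifying the subtree of the forced child with an independent BTP, and then invoking the inductive hypothesis on the shorter path $\sigma_2\dots\sigma_l$. The paper makes the ``delicate step'' you flag precise in exactly the way you suggest, by writing $\mathbf{X}_0$ as a deterministic functional $\mathbf{X}_0(\mathbf{T}_{\sigma_1})$ of the $\sigma_1$-birth-time Poisson process (with the remaining randomness independent of it), so that adjoining $z_1$ to $\mathbf{T}_{\sigma_1}$ yields $\mathbf{X}_0\cup(\{\sigma_1 z_1\}\oplus\mathbf{X}_1)$ while leaving the rest of the process unconditioned.
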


\begin{proof}
For a vertex $v$ and an edge $e$ we let $e\ni v$ denote that $v$ is one of the end points of $e$. For each edge $e\ni v_0$, we let $\mathbf{T}_e$ denote the set of birth times of the $e$-children of the original particle in $\mathbf{X}_0$. Clearly, $\mathbf{T}_e$ for $e\ni v_0$ are independent Poisson processes on $\mathbb{R}_+$ with constant intensity $1$.

A central property of the BTP is that, after a particle is born, the set of its descendants is itself distributed as a BTP. Furthermore, this subprocess is then independent of the behavior of any other particle. Hence we can express $\mathbf{X}_0$ recursively by
\begin{equation}\label{eq:Xorec}
\mathbf{X}_0 = \bigcup_{e \ni v_0} \bigcup_{t_i\in \mathbf{T}_e} \{e t_i\}\oplus \mathbf{Y}_{e,i}
\end{equation}
where for each $e\ni v_0$ and each $i=1, 2, \dots$, we have $\mathbf{Y}_{e,i}$ independently distributed as a BTP originating at the vertex opposite to $v_0$ along $e$. For any discrete set $T \subset \mathbb{R}_+$ we let $\mathbf{X}_0(T)$ denote the random variable obtained by replacing $\mathbf{T}_{\sigma_1}$ by $T$ in \eqref{eq:Xorec}. Then $\mathbf{X}_0(\cdot)$ is a random function independent of $\mathbf{T}_{\sigma_1}$, and we have $\mathbf{X_0}=\mathbf{X_0}(\mathbf{T}_{\sigma_1})$. Note that, by independence, $\mathbf{X}_0(T)$ is a version of the conditional distribution of $\mathbf{X}_0$ given $\mathbf{T}_{\sigma_1}=T$.

For each $T$ as above and $t\in T$, we define
\begin{equation}
F(T) = \mathbb{E}\sum_{x \in V_{\sigma} (\mathbf{X}_0(T))} f(\mathbf{X}_0(T), x)
\end{equation}
\begin{equation}
F(T, t) = \mathbb{E}\sum_{\substack{x \in V_{\sigma} (\mathbf{X}_0(T))\\x \geq \{\sigma_1 t\}}} f(\mathbf{X}_0(T), x).
\end{equation}
It is clear from the definition that, for any fixed $T$, we have $F(T) = \sum_{t\in T} F(T, t)$. Furthermore, as $\mathbf{T}_{\sigma_1}$ and $\mathbf{X}_0(\cdot)$ are independent we have $\mathbb{E} F(\mathbf{T}_{\sigma_1}) = \mathbb{E} \sum_{x \in V_{\sigma} (\mathbf{X}_0)} f(\mathbf{X}_0, x)$. Hence by the Slivnyak-Mecke formula we have
\begin{equation}
\mathbb{E} \sum_{x \in V_{\sigma} (\mathbf{X}_0)} f(\mathbf{X}_0, x) = \mathbb{E} \sum_{t\in \mathbf{T}_{\sigma_1}} F(\mathbf{T}_{\sigma_1}, t)=\int_0^\infty \mathbb{E} F(\mathbf{T}_{\sigma_1}\cup \{z_1\}, z_1) \, dz_1.
\end{equation}
By independence of $\mathbf{X}_0(\cdot)$ and $\mathbf{T}_{\sigma_1}\cup \{z_1\}$ we can conclude that
\begin{equation}\label{eq:X0Tcupz1}
\mathbb{E} \sum_{x \in V_{\sigma} (\mathbf{X}_0)} f(\mathbf{X}_0, x) =
\int_0^\infty \mathbb{E}\sum_{\substack{x \in V_{\sigma} (\mathbf{X}_0(\mathbf{T}_{\sigma_1}\cup\{z_1\} ))\\x \geq \{\sigma_1 z_1\}}} f(\mathbf{X}_0(\mathbf{T}_{\sigma_1}\cup\{z_1\}), x) \,dz_1.
\end{equation}

Let us now consider the random process $\mathbf{X}_0(\mathbf{T}_{\sigma_1}\cup\{z_1\})$. We can interpret the expression for $\mathbf{X}_0$ in \eqref{eq:Xorec} and the subsequent definition of $\mathbf{X}_0(T)$ as that these processes are generated by first determining the birth time for each child of the original particle, and then for each child independently generating a BTP which determines its descendants. When seen in this light, it is clear that the only difference between $\mathbf{X}_0$ and $\mathbf{X}_0(\mathbf{T}_{\sigma_1}\cup\{z_1\})$ is that the latter has an additional particle in generation $1$. Hence, $\mathbf{X}_0(\mathbf{T}_{\sigma_1}\cup\{z_1\})$ has the same distribution as $\mathbf{X}_0\cup \left( \{\sigma_1 z_1\} \oplus \mathbf{X}_1\}\right)$, and so we can replace $\mathbf{X}_0(\mathbf{T}_{\sigma_1}\cup\{z_1\})$ in \eqref{eq:X0Tcupz1} by this other random process.

Letting $\tilde{\sigma} = \{\sigma_2, \sigma_3, \dots, \sigma_l\}$, we note that the subset of elements in $V_\sigma \left(\mathbf{X}_0\cup \left( \{\sigma_1 z_1\} \oplus \mathbf{X}_1\}\right)\right)$ that are descendants of $\{\sigma_1 z_1\}$ is precisely the set $ \{ \sigma_1 z_1\} \oplus V_{\tilde{\sigma}}\left(\mathbf{X}_1\right)$. Hence \eqref{eq:X0Tcupz1} simplifies to
\begin{equation}\label{eq:recformula}
\mathbb{E} \sum_{x \in V_{\sigma} (\mathbf{X}_0)} f(\mathbf{X}_0, x) = \int_0^\infty \mathbb{E}\sum_{x \in V_{\tilde{\sigma}}(\mathbf{X}_1) } f\left(\mathbf{X}_0\cup \left( \{\sigma_1 z_1\} \oplus \mathbf{X}_1\}\right), \{\sigma_1 z_1\}\oplus x\right) \,dz_1.
\end{equation}

The lemma follows by induction. If $l=1$, then the only particle in $V_{\tilde{\sigma}}(\mathbf{X}_1)$ is $\{\}$, the original particle in $\mathbf{X}_1$, and so equation \eqref{eq:recformula} simplifies to
\begin{equation}
\mathbb{E} \sum_{x \in V_{\sigma} (\mathbf{X}_0)} f(\mathbf{X}_0, x) = \int_0^\infty \mathbb{E} f(\mathbf{X}^{z_1}, \{\sigma_1 z_1\}) \,dz_1
\end{equation}
as desired.

Now assume $l > 1$. By the induction hypothesis we have for any non-negative function $\tilde{f}$
\begin{equation}\label{eq:indhyp}
\mathbb{E} \sum_{x \in V_{\tilde{\sigma}}(\mathbf{X}_1)} \tilde{f}(\mathbf{X}_1, x) = \int_0^\infty \dots \int_0^\infty \mathbb{E}\tilde{f}( \tilde{\mathbf{X}}^{z_2,\dots,z_l}, \tilde{x}^{z_2,\dots,z_l})\,dz_2\dots\,dz_l,
\end{equation}
where 
\begin{equation}
\tilde{\mathbf{X}}^{z_2, \dots, z_l} = \mathbf{X}_1 \cup \left(\{\sigma_2 z_2\}\oplus\mathbf{X}_2\right) \cup \left(\{\sigma_2 z_2 \sigma_3 z_3\}\oplus\mathbf{X}_3\right) \cup \dots \cup \left(\{\sigma_2 z_2 \sigma_3 z_3 \dots \sigma_l z_l\}\oplus\mathbf{X}_l\right)
\end{equation}
and $\tilde{x}^{z_2,\dots,z_l} = \{\sigma_2 z_2 \sigma_3 z_3 \dots \, \sigma_l z_l\}$.

Let us consider the expression $\mathbb{E} \sum_{x \in V_{\tilde{\sigma}}(\mathbf{X}_1) } f\left(\mathbf{X}_0\cup \left( \{\sigma_1 z_1\} \oplus \mathbf{X}_1\}\right), \{\sigma_1 z_1\}\oplus x\right)$, the integrand on the right-hand side of equation \eqref{eq:recformula}. If we fix $z_1 > 0$ and condition on $\mathbf{X}_0 = X_0$, then $f\left(\mathbf{X}_0\cup \left( \{\sigma_1 z_1\} \oplus \mathbf{X}_1\}\right), \{\sigma_1 z_1\}\oplus x\right)$ is a function of $\mathbf{X}_1$ and $x$ only. By the induction hypothesis,
\begin{align*}
&\mathbb{E}\sum_{x \in V_{\tilde{\sigma}}(\mathbf{X}_1) } f\left(X_0\cup \left( \{\sigma_1 z_1\} \oplus \mathbf{X}_1\}\right), \{\sigma_1 z_1\}\oplus x\right)\\
&\qquad= \int_0^\infty \dots \int_0^\infty \mathbb{E} f( X_0 \cup \{\sigma_1 z_1\}\oplus\tilde{\mathbf{X}}^{z_2,\dots,z_l}, \{\sigma_1 z_1\} \oplus \tilde{x}^{z_2,\dots,z_l})\,dz_2\dots\,dz_l.
\end{align*}
Hence, by integrating this expression over $z_1$ and $\mathbf{X}_0$ we conclude that
\begin{align*}
&\mathbb{E} \sum_{x \in V_{\sigma} (\mathbf{X}_0)} f(\mathbf{X}_0, x)\\
&\qquad= \int_0^\infty \dots \int_0^\infty \mathbb{E} f( \mathbf{X}_0 \cup \{\sigma_1 z_1\}\oplus\tilde{\mathbf{X}}^{z_2,\dots,z_l}, \{\sigma_1 z_1\} \oplus \tilde{x}^{z_2,\dots,z_l})\,dz_1\dots\,dz_l,
\end{align*}
where clearly $\mathbf{X}^{z_1,\dots,z_l}=\mathbf{X}_0 \cup \{\sigma_1 z_1\} \tilde{\mathbf{X}}^{z_2,\dots,z_l}$ and $x^{z_1,\dots,z_l} = \{\sigma_1 z_1\} \oplus \tilde{x}^{z_2,\dots,z_l}$.
\end{proof}

\begin{lemma}\label{lemma:superpoisson}
Let $\mathbf{X}$ be a BTP originating at a vertex $v$. Let $\varphi$ be an indicator function defined over the set of potential particles. If $\varphi(\{\})=0$, then
\begin{equation}
\mathbb{P}\left( \varphi(x)=0\, \forall x\in \mathbf{X}\right) \geq \exp\left(-\mathbb{E}\sum_{x \in \mathbf{X}} \varphi(x)\right).
\end{equation}
\end{lemma}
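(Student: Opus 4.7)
The plan is to exploit the branching structure of the BTP recursively. For each potential particle $x=\{e_1 t_1 \dots e_k t_k\}$ ending at a vertex $u$, let $\mathbf{Y}_x$ be a fresh BTP originating at $u$, and set
\begin{equation*}
G(x)\defeq \mathbb{E}\prod_{y\in\mathbf{Y}_x}\bigl(1-\varphi(x\oplus y)\bigr),\qquad M(x)\defeq \mathbb{E}\sum_{y\in\mathbf{Y}_x}\varphi(x\oplus y).
\end{equation*}
Since $1-\varphi$ is $\{0,1\}$-valued, $G(\{\})$ is precisely the left-hand side of the lemma and $M(\{\})$ is the expectation on the right, so the goal is to prove $G(\{\})\geq e^{-M(\{\})}$.

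Decomposing $\mathbf{Y}_x$ into its root together with the independent subtrees rooted at each child, and noting that along each edge $e\ni u$ the birth times form a rate-$1$ Poisson process, linearity of expectation gives the recursion
\begin{equation*}
M(x)=\varphi(x)+\sum_{e\ni u}\int_0^\infty M(x\oplus\{e t\})\,dt,
\end{equation*}
while the Laplace functional of a rate-$1$ Poisson process, $\mathbb{E}\prod_{t\in\mathbf{T}}h(t)=\exp\bigl(\int_0^\infty(h(t)-1)\,dt\bigr)$ for $h\in[0,1]$, combined with independence across edges yields
\begin{equation*}
G(x)=\bigl(1-\varphi(x)\bigr)\exp\!\left(\sum_{e\ni u}\int_0^\infty\bigl(G(x\oplus\{e t\})-1\bigr)\,dt\right).
\end{equation*}

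The crucial ingredient is the pointwise first-moment bound $G(y)\geq 1-M(y)$, which holds for every potential particle $y$: it is the ordinary union bound when $M(y)<1$, and is trivial otherwise because $G\geq 0$. Rearranging to $G(y)-1\geq -M(y)$ and substituting into the $G$-recursion at $x=\{\}$, using the hypothesis $\varphi(\{\})=0$, gives
\begin{equation*}
G(\{\})\geq\exp\!\left(-\sum_{e\ni v}\int_0^\infty M(\{e t\})\,dt\right)=\exp(-M(\{\})),
\end{equation*}
which is the claim. The only real technical point is making sense of the recursions when $\mathbf{Y}_x$ is infinite; if $M(\{\})=\infty$ the bound is vacuous, and otherwise I would first restrict $\varphi$ to particles born before a fixed time $T$ so that only finitely many particles contribute and the Laplace functional converges absolutely, verify the identity there, and then let $T\to\infty$ using monotone convergence on both sides. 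The heart of the argument is the unexpectedly clean interaction between the Poisson Laplace functional (which turns a product into an exponential of an integral) and the union bound (which compares $G-1$ to $-M$ on the nose); no other step seems likely to cause trouble.
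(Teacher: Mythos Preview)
Your argument is correct and is essentially the same as the paper's proof, just dressed in different language: the paper observes that the generation-one particles whose subtree contains a $\varphi=1$ descendant form a thinned Poisson process (so their count is exactly Poisson), while you obtain the identical exponential via the Laplace functional identity $\mathbb{E}\prod_{t\in\mathbf{T}}h(t)=\exp\bigl(\int_0^\infty(h(t)-1)\,dt\bigr)$ with $h(t)=G(\{et\})$. Your recursive framework for $G$ and $M$ at every $x$ is more machinery than is actually used, since in the end you invoke the recursion only at the root and apply the union bound $G\geq 1-M$ only at the first generation---precisely what the paper does with its $\psi$.
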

\begin{proof}
For any particle $x\in \mathbf{X}$, let $\psi(x)$ be the indicator function for the event that $\varphi(y) = 1$ for at least one descendant $y$ of $x$. Clearly, we have $\sum_{x \text{ in gen } 1} \psi(x) \leq \sum_{x\in \mathbf{X}} \varphi(x).$ Furthermore, $\sum_{x \text{ in gen } 1} \psi(x)=0$ if and only if $\sum_{x\in \mathbf{X}} \varphi(x)=0$.

Let $d$ denote the degree of the vertex $v$. Then the particles in generation one are born according to a Poisson process on $\mathbb{R}_+^d$. Conditioned on the particles in generation one, the random variables $\psi(x)$ for each such particle $x$ are independent, and are one with probability only depending on the location and birth time of $x$. Hence, by the random selection property of a Poisson process, the particles in generation one that satisfy $\psi(x)=1$ are also born according to a Poisson process, and, in particular, the number of such particles is Poisson distributed.

We conclude that the probability that $\varphi(x)=0$ for all $x\in \mathbf{X}$ is $e^{ -\mathbb{E} \sum_{x \text{ in gen } 1} \psi(x)}$, which is at least $e^{ -\mathbb{E} \sum_{x \in \mathbf{X}} \varphi(x)}$.
\end{proof}

\begin{proof}[Proof of Theorem \ref{thm:uncontestedBTP}]
For any path $\sigma$ from $v_0$ to $v$, let $S_\sigma(v, t)$ and $B_\sigma(v, t)$ denote the contributions to $S(v, t)$ and $B(v, t)$ respectively from particles whose ancestral lines follow $\sigma$. Similarly, we define $P(v, t)=\mathbb{E} Z_0(v, t)$ and $P_\sigma(v, t)$ the contribution to $P(v, t)$ from particles whose ancestral lines follow $\sigma$. As no two particles at the same vertex can both be uncontested, $Z_0(v, t)$ can only assume the values $0$ and $1$, so $P(v, t)$ is indeed the probability that $Z_0(v, t)$ is non-zero.

We start by considering the case where $\sigma$ is a non-simple path. As $S(v, t)$ is the expected number of particles at vertex $v$ at time $t$ whose ancestral line follows a simple path, it is clear that the contribution to $S(v, t)$ from any non-simple path is zero. Similarly, if the ancestral line of a particle follows a non-simple path, then the particle cannot be uncontested. Hence for any non-simple path $\sigma$ we have $S_\sigma(v, t) = P_\sigma(v, t) = 0$, and trivially $B_\sigma(v, t) \geq 0$.

Let us now fix $\sigma$, a simple path from $v_0$ to $v$. We denote the length of $\sigma$ by $l$. For any realization $X$ of $\mathbf{X}_0$ and $x\in X$, let $T(X, x)$ denote the birth time of $x$. Then it follows from  Lemma \ref{lemma:palmBTP} that
\begin{equation}\label{eq:Ssigma}
S_\sigma(v, t) = \mathbb{E} \sum_{x \in V_{\sigma}(\mathbf{X}_0)} \mathbbm{1}_{T(\mathbf{X}_0, x) \leq t} = \int_0^\infty \dots \int_0^\infty \mathbbm{1}_{z_1+\dots+z_l\leq t}\,dz_1\dots\,dz_l.
\end{equation}
In order to express $B_\sigma$ and $P_\sigma$ in a similar manner, we need to revise our notation. Strictly speaking, $b(x)$ is a function not only of a particle, but also of the realization of the BTP. Following the convention we have used earlier in this section, we will now denote this quantity by $b(\mathbf{X}_0, x)$. Using this notation we have
\begin{align}
B_\sigma(v, t) &= \mathbb{E}\sum_{x\in V_{\sigma}(\mathbf{X}_0)} \mathbbm{1}_{T(\mathbf{X}_0, x) \leq t} b(\mathbf{X}_0, x)\\
P_\sigma(v, t) &= \mathbb{E}\sum_{x\in V_{\sigma}(\mathbf{X}_0)} \mathbbm{1}_{T(\mathbf{X}_0, x) \leq t} \mathbbm{1}_{b(\mathbf{X}_0, x)=0}.
\end{align}
Hence, again by Lemma \ref{lemma:palmBTP}
\begin{align}
B_\sigma(v, t) &= \int_0^\infty \dots \int_0^\infty \mathbbm{1}_{z_1+\dots+z_l\leq t} \mathbb{E}\left[ b(\mathbf{X}^{z_1, \dots,z_l}, x^{z_1,\dots,z_l})\right]  \,dz_1\dots\,dz_l\label{eq:Bsigma}\\
\intertext{and}
P_\sigma(v, t) &= \int_0^\infty \dots \int_0^\infty \mathbbm{1}_{z_1+\dots+z_l\leq t} \mathbb{P}\left( b(\mathbf{X}^{z_1, \dots, z_l}, x^{z_1,\dots,z_l})=0\right)  \,dz_1\dots\,dz_l\label{eq:Psigma}.
\end{align}

Fix $z_1, \dots, z_l > 0$ such that $z_1+\dots+z_l \leq t$ and consider the random variable $b(\mathbf{X}^{z_1, \dots, z_l}, x^{z_1,\dots,z_l})$. As $\sigma$ is a simple path, it follows that $b(\mathbf{X}^{z_1, \dots, z_l}, x^{z_1,\dots,z_l})$ is equal to the number of particles $x \in \mathbf{X}^{z_1,\dots,z_l}$ such that, for some $0 \leq i \leq l$, $x$ is born at the vertex $v_i$ before time $\sum_{k=1}^i z_k$. This means that for appropriate indicator functions $\varphi_0, \dots, \varphi_l$ we have
\begin{equation}
b(\mathbf{X}^{z_1, \dots, z_l}, x^{z_1,\dots,z_l}) = \sum_{i=0}^l \sum_{x \in \mathbf{X}_i} \varphi_i(x).
\end{equation}
As the original particles in $\mathbf{X}_0, \dots, \mathbf{X}_l$ correspond to ancestors of $x^{z_1,\dots,z_l}$, these are never counted in $b$ and hence the corresponding indicator functions are always zero. Furthermore, as $\mathbf{X}_0, \dots, \mathbf{X}_l$ are independent processes, we have by Lemma \ref{lemma:superpoisson}
\begin{align}
\begin{split}
\mathbb{P}\left( b(\mathbf{X}^{z_1, \dots, z_l}, x^{z_1,\dots,z_l})=0 \right) &= \prod_{i=0}^l \mathbb{P}\left( \varphi_i(x)=0 \, \forall x\in \mathbf{X}_i\right)\\
&\geq \prod_{i=0}^l \exp\left( -\mathbb{E} \sum_{x\in \mathbf{X}_i} \varphi_i(x)\right)\\
&= \exp\left( -\mathbb{E} b(\mathbf{X}^{z_1, \dots, z_l}, x^{z_1,\dots,z_l}) \right)
\end{split}
\end{align}
Hence, by \eqref{eq:Psigma},
\begin{equation}
P_\sigma(v, t) \geq \int_0^\infty \dots \int_0^\infty \mathbbm{1}_{z_1+\dots+z_l\leq t} \exp\left(- \mathbb{E}b(\mathbf{X}^{z_1, \dots, z_l}, x^{z_1,\dots,z_l})\right)  \,dz_1\dots\,dz_l.\label{eq:Psigma2}
\end{equation}

Let $r_0\in \mathbb{R}$ be fixed. By convexity we have $e^{-r} \geq e^{-r_0}(1+r_0)-e^{-r_0}r$. Applying this inequality to the integrand in \eqref{eq:Psigma2} and comparing to \eqref{eq:Ssigma} and \eqref{eq:Bsigma} we get, for any simple path $\sigma$,
\begin{equation}\label{eq:pathwiseineq}
P_\sigma(v, t) \geq e^{-r_0}(1+r_0)S_\sigma(v, t)-e^{-r_0} B_\sigma(v, t).
\end{equation}
As remarked, for non-simple paths $\sigma$ we have $P_\sigma=S_\sigma=0$ and $B_\sigma \geq 0$, so clearly \eqref{eq:pathwiseineq} holds for all paths $\sigma$ from $v_0$ to $v$. Summing this inequality over all such paths $\sigma$, we get
\begin{equation}
P(v, t) \geq e^{-r_0}(1+r_0) S(v, t) - e^{-r_0} B(v, t).
\end{equation}
It is easy to verify that the right-hand side is maximized by $r_0= \frac{B(v, t)}{S(v, t)}$, which yields the inequality $P(v, t) \geq S(v, t) e^{-\frac{B(v, t)}{S(v, t)}}$ as desired.
\end{proof}

\section{Proof of Proposition \ref{prop:BSestimates}}\label{sec:proofBSestimates}

Throughout this section we assume that the underlying graph in the BTP is $\hc{n}$, and, unless stated otherwise, the BTP is assumed to originate at $\vz$. We will accordingly let $m(v, t)$ denote the expected number of particles at $v$ at time $t$ for a BTP originating at $\vz$, as given by \eqref{eq:mvthypercube}. In order to simplify notation, we will interpret the vertices of $\hc{n}$ as the elements of the additive group $\mathbb{Z}_2^n$, the $n$-fold group product of $\mathbb{Z}_2$, and we let $e_1, e_2, \dots, e_n\in \mathbb{Z}_2^n$ denote the standard basis. We note that for any fixed vertex $w\in \hc{n}$, the map $v \mapsto v-w$ is a graph isomorphism taking $w$ to $\vz$. Hence, for a BTP originating at $w$, the expected number of particles at $v$ at time $t$ is given by $m(v-w, t)$. While addition and subtraction are equivalent in $\mathbb{Z}_2^n$, we will sometimes make a formal distinction between them in order to indicate direction.

\begin{lemma}\label{lemma:derivatives} For any $t > 0$ and $v\in \mathbb{Q}_n$ we have
\begin{equation}
\frac{d^2}{dt^2} m(v, t) = \sum_{i=1}^n\sum_{j=1}^n m(v+e_i+e_j, t)
\end{equation}
and
\begin{equation}
\frac{1}{2} \frac{d^2}{dt^2} m(v, t)^2 = \sum_{i=1}^n \sum_{j=1}^n m(v+e_i+e_j, t) m(v,t) + m(v+e_i, t)m(v+e_j, t).
\end{equation}
\end{lemma}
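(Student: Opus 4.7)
The plan is to derive both identities by repeated application of the differential equation \eqref{eq:DEm} satisfied by $m(v,t)$. On the hypercube $\hc{n}$, the neighbors of a vertex $v$ are exactly $v+e_1,\dots,v+e_n$, so \eqref{eq:DEm} specializes to
\begin{equation*}
\frac{d}{dt} m(v,t) = \sum_{i=1}^n m(v+e_i, t),
\end{equation*}
and this is the only identity I will need to feed into the calculation.

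For the first statement, I would simply differentiate this expression once more and interchange the derivative with the finite sum. Since $\frac{d}{dt} m(v+e_i, t) = \sum_{j=1}^n m(v+e_i+e_j, t)$ by the same formula applied at the shifted vertex $v+e_i$, the double sum $\sum_{i,j} m(v+e_i+e_j, t)$ drops out immediately.

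For the second statement, I would start from the product rule
\begin{equation*}
\frac{1}{2} \frac{d}{dt} m(v,t)^2 = m(v,t)\,\frac{d}{dt} m(v,t) = m(v,t) \sum_{i=1}^n m(v+e_i,t),
\end{equation*}
then differentiate once more using the product rule again. One term comes from differentiating $m(v,t)$ and recombines with $\sum_i m(v+e_i,t)$ to give the cross product $\sum_{i,j} m(v+e_i,t)m(v+e_j,t)$; the other comes from differentiating each $m(v+e_i,t)$ and produces $m(v,t)\sum_{i,j}m(v+e_i+e_j,t)$. Adding these yields the desired formula.

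The argument is purely mechanical — there is no real obstacle, only the bookkeeping of making sure the product rule and the shifted instance of \eqref{eq:DEm} are applied correctly so that each of the $i,j$ indices ranges independently from $1$ to $n$. The point of stating the lemma in this form is presumably that the right-hand sides are sums of products of $m$-values at shifted vertices, which will be convenient in Section \ref{sec:proofBSestimates} when integrating against the explicit hyperbolic expression \eqref{eq:mvthypercube} to estimate $A(\vo,\fpptime)$ and $B(\vo,\fpptime)$.
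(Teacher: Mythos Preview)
Your proposal is correct and matches the paper's proof essentially line for line: the paper also differentiates the relation $\frac{d}{dt} m(v,t) = \sum_i m(v+e_i,t)$ once more for the first identity, and for the second uses $\frac{1}{2}\frac{d^2}{dt^2} m(v,t)^2 = m''(v,t)m(v,t) + m'(v,t)^2$, which is exactly your product-rule computation.
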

\begin{proof}
Recall that $m(v, t)$ satisfies
\begin{equation}
\frac{d}{dt} m(v, t) = \sum_{i=1}^n m(v+e_i, t).
\end{equation}
This directly implies that
\begin{align*}
\frac{d^2}{dt^2} m(v, t) &= \frac{d}{dt} \sum_{i=1}^n m(v+e_i, t)\\
&=\sum_{i=1}^n \frac{d}{dt} m(v+e_i, t)\\
&=\sum_{i=1}^n \sum_{j=1}^n m(v+e_i+e_j, t).
\end{align*}
The second equation now follows from  $\frac{1}{2} \frac{d^2}{dt^2} m(v, t)^2 = m''(v, t) m(v, t) + m'(v, t)m'(v, t)$.
\end{proof}

\begin{lemma}\label{lemma:conv}
Let $s, t\geq 0$ and $v\in \mathbb{Q}_n$. Then
\begin{equation}
\sum_{w\in \mathbb{Q}_n} m(w, s)m(v+w, t) = m(v, s+t).
\end{equation}
\end{lemma}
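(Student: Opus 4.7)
The plan is to prove this convolution identity by a direct coordinate-wise factorization, exploiting the explicit product form of $m(v, t)$ given in \eqref{eq:mvthypercube}. Identifying $\mathbb{Q}_n$ with $\mathbb{Z}_2^n$, both factors $m(w, s)$ and $m(v+w, t)$ decompose as products over the $n$ coordinates, and the sum over $w \in \mathbb{Z}_2^n$ decomposes into a product of sums over $w_i \in \{0, 1\}$. The identity then reduces to $n$ independent one-coordinate verifications.

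Concretely, writing $m(w,s) = \prod_i f_s(w_i)$ with $f_s(0)=\cosh s$ and $f_s(1)=\sinh s$, and similarly for $m(v+w,t)$ (noting that $(v+w)_i = v_i + w_i \pmod{2}$), the left-hand side becomes $\prod_{i=1}^n \bigl( \sum_{w_i \in \{0,1\}} f_s(w_i)\, f_t(v_i + w_i) \bigr)$. For each coordinate $i$ the inner sum is $\cosh s \cosh t + \sinh s \sinh t = \cosh(s+t)$ when $v_i=0$, and $\cosh s \sinh t + \sinh s \cosh t = \sinh(s+t)$ when $v_i=1$, by the hyperbolic addition formulas. Multiplying over the $n$ coordinates produces $(\sinh(s+t))^{|v|}(\cosh(s+t))^{n-|v|} = m(v, s+t)$, as required.

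As an alternative I could invoke the branching property of the BTP: for a BTP originating at $\vz$, the expected number of particles at $v$ at time $s+t$ is obtained by conditioning on the state at time $s$, exploiting that the descendant subprocess of each particle alive at time $s$ is itself a BTP independent of the others. By linearity of expectation this gives $m(v, s+t) = \sum_w m(w,s)\,m(v-w, t)$, where $m(v-w, t)$ is the expected count at $v$ of a BTP originating at $w$ (as used at the start of Section 4). Since in $\mathbb{Z}_2^n$ we have $-w = w$, so $v - w = v + w$, this recovers the left-hand side.

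I do not expect any real obstacle; the whole argument is essentially one application of the product structure of $m$ together with the addition formulas for $\sinh$ and $\cosh$. If anything, the only point that requires attention is keeping the identifications straight between additive and subtractive notation on $\mathbb{Z}_2^n$, which the paper has already flagged.
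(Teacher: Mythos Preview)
Your proof is correct. Your primary route --- the coordinate-wise factorization using the explicit product formula \eqref{eq:mvthypercube} together with the hyperbolic addition laws --- is a genuinely different argument from the paper's, which instead proves the identity probabilistically by conditioning the BTP on its state at time $s$ and invoking the branching property (exactly the argument you sketch as your alternative). Your computational approach is entirely self-contained given \eqref{eq:mvthypercube} and avoids any appeal to the process; the paper's approach is more conceptual and explains \emph{why} the convolution identity holds (it is the Chapman--Kolmogorov relation for the mean field of the BTP), and would carry over verbatim to any vertex-transitive graph even without an explicit formula for $m$. Since you already note the probabilistic proof as an alternative, you have in effect recovered the paper's argument as well.
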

\begin{proof}
If we condition on the state of the BTP at time $s$, then, at subsequent times, the process can be described as a superposition of independent branching processes, originating from each particle alive at time $s$. For each such process originating from a particle at vertex $w$, we have, by symmetry of $\hc{n}$, that the expected number of particles at vertex $v$ at time $t+s$ is $m(v+w, t)$. Hence
\begin{equation}
\mathbb{E}\left[ Z(v, s+t) \middle\vert Z(s) \right] = \sum_{w \in \hc{n}} Z(w, s) m(v+w, t).
\end{equation}
The lemma follows by taking the expected value of this expression.
\end{proof}

We now turn to the problem of expressing $A(\vo, u)$ and $B(\vo, u)$ in terms of $m(v, t)$. Fix $u>0$ and let $\mathbf{X}$ be a BTP on $\hc{n}$ originating at $\vz$. Let $\mathcal{T}$ denote the random set of triples of particles $(x, y, z)$ in $\mathbf{X}$ such that
\begin{itemize}
\item $x$ is located at $\vo$ at time $u$
\item $y$ is an ancestor of $x$
\item $y$ and $z$ occupy the same vertex
\item $z$ was born before $y$.
\end{itemize}
We furthermore partition this set into $\mathcal{T}_a$, the set of all such triples where $y$ is a descendant of $z$, and $\mathcal{T}_b$, the set of all such triples where $y$ is not a descendant of $z$. For any $x$ at $\vo$ at time $u$ in $\mathbf{X}$, it is clear that $c(x)$ gives the number of triples in $\mathcal{T}$ where the first element is $x$. Hence by summing $c(x)$ over all particles at $\vo$ at time $u$ we obtain the size of $\mathcal{T}$. Similarly we see that $\sum_x a(x)$ and $\sum_x b(x)$ where $x$ goes over all particles $x$ at $\vo$ at time $u$ gives the size of $\mathcal{T}_a$ and $\mathcal{T}_b$ respectively. Hence $A(\vo, u)=\mathbb{E}\abs{\mathcal{T}_a}$, $B(\vo, u)=\mathbb{E}\abs{\mathcal{T}_b}$ and $A(\vo, u) + B(\vo, u) = \mathbb{E}\abs{\mathcal{T}}$.

In the following proposition, we derive explicit expressions for $A(\vo, u)$ and $B(\vo, u)$ by counting the expected number of elements in $\mathcal{T}_a$ and $\mathcal{T}$ respectively. Our argument is reminiscent of the second moment calculation for $Z(\vo, u)$ by Durrett in \cite{fp93}.
\begin{proposition}\label{prop:ABformulas}
For any $u>0$ we have
\begin{equation}\label{eq:Avousumsintegralsgalore}
A(\vo, u) = \sum_{v\in \mathbb{Q}_n} \sum_{i=1}^n \sum_{j=1}^n \int_0^\infty \int_0^\infty \mathbbm{1}_{s+t\leq u} m(v, s) m(\vo-v, u-s-t) m(e_j+e_i, t)\,ds\,dt
\end{equation}
\begin{align}
\begin{split}\label{eq:Bvousumsintegralsgalore}
&A(\vo, u)+B(\vo, u) = \sum_{v\in \mathbb{Q}_n} \sum_{w\in \mathbb{Q}_n} \sum_{i=1}^n \sum_{j=1}^n \int_0^\infty \int_0^\infty \mathbbm{1}_{s+t\leq u} m(v, s) m(\vo-w, u-s-t) \cdot\\
&\qquad \cdot \Big( m(w-v, t) m(w-v-e_i+e_j, t) + m(w-v-e_i, t)m(w-v+e_j, t)\Big)\,ds\,dt.
\end{split}
\end{align}
\end{proposition}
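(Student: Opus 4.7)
The plan is to verify both identities by enumerating the triples in $\mathcal{T}_a$ (for $A$) and $\mathcal{T}$ (for $A+B$) combinatorially, and converting the resulting sums to the claimed integrals via Lemma~\ref{lemma:palmBTP} together with the exponential generating function expansion $m(v, t) = \sum_{l \geq 0} L_v(l)\, t^l / l!$, where $L_v(l)$ denotes the number of walks of length $l$ from $\vz$ to $v$ in $\hc{n}$.

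For \eqref{eq:Avousumsintegralsgalore}, each triple $(x, y, z) \in \mathcal{T}_a$ determines a unique decomposition of the ancestral line of $x$: a prefix $\sigma_1$ of length $l_1$ from $\vz$ to the common vertex $v$ of $y$ and $z$, a loop $\sigma_2$ at $v$ of length at least $2$ from $z$ to $y$, and a suffix $\sigma_3$ of length $l_3$ from $v$ to $\vo$. Splitting off the first and last edges $e_i, e_j$ of $\sigma_2 = (e_i) \cdot \sigma_2' \cdot (e_j)$ with $|\sigma_2'| = l_2$, the number of choices for $(\sigma_1, \sigma_2', \sigma_3)$ is $L_v(l_1)\, L_{e_i + e_j}(l_2)\, L_{\vo-v}(l_3)$ by translation invariance of $\hc{n}$. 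By Lemma~\ref{lemma:palmBTP} applied to $\sigma = \sigma_1 \sigma_2 \sigma_3$, the contribution to $A(\vo, u)$ from each such structured path equals $u^{l_1+l_2+l_3+2}/(l_1+l_2+l_3+2)!$. The elementary Beta identity
\[
\int_0^u \!\! \int_0^{u-s} \frac{s^{l_1}}{l_1!}\, \frac{t^{l_2}}{l_2!}\, \frac{(u-s-t)^{l_3}}{l_3!} \, dt\, ds \;=\; \frac{u^{l_1+l_2+l_3+2}}{(l_1+l_2+l_3+2)!}
\]
then allows me to interchange the triple sum with a double integral and recognize the three marginal generating functions as $m(v, s)$, $m(e_i+e_j, t)$ and $m(\vo-v, u-s-t)$, yielding \eqref{eq:Avousumsintegralsgalore}.

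For \eqref{eq:Bvousumsintegralsgalore}, I would decompose each triple in $\mathcal{T}$ by the lowest common ancestor $u^*$ of $y$ and $z$ in the BTP genealogy, with $v$ and $s$ its vertex and birth time. Either $u^* = z$ (Case $\mathcal{T}_a$, for which the previous decomposition applies verbatim), or $u^*$ is a strict ancestor of both particles (Case $\mathcal{T}_b$), in which case the $z$- and $y$-lineages depart from $u^*$ via distinct children with first edges $e_i$ and $e_j$ respectively; the subcase $e_i = e_j$ is permitted provided the two children are distinct, which occurs naturally since the $e_i$-children of $u^*$ are born as an independent Poisson process of rate $1$. The branching property of the BTP makes the sub-BTPs rooted at different children of $u^*$ independent, so Lemma~\ref{lemma:palmBTP} applied to each of the two lineages (composed with integration over the Poisson birth times of the two first children) produces two types of contributions. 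The differential identity
\[
\sum_{i, j} \bigl[ m(w-v, t)\, m(w-v+e_i+e_j, t) + m(w-v+e_i, t)\, m(w-v+e_j, t)\bigr] \;=\; \tfrac{1}{2}\, \tfrac{d^2}{dt^2}\, m(w-v, t)^2
\]
supplied by Lemma~\ref{lemma:derivatives} (using $-e = e$ in $\mathbb{Z}_2^n$) shows that these two types of contributions assemble precisely into the sum $T_1 + T_2$ in \eqref{eq:Bvousumsintegralsgalore}. The final step is the same Beta-function conversion of path-sums to the stated integral form.

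The main obstacle is the bookkeeping in the $A+B$ case: one must carefully verify that (i) the subcase $e_i = e_j$ of $\mathcal{T}_b$ (distinct children of $u^*$ along the same edge) contributes with the correct multiplicity and combines with Case $\mathcal{T}_a$ precisely to produce $T_1$, (ii) the strict ordering constraint $t_z < t_y$ does not introduce spurious factors once one exploits the symmetry of the Poisson-birth structure of the children of $u^*$, and (iii) the sum over the particle $u^*$ (including the possibility that $u^*$ is the originating particle) correctly reproduces the factor $m(v, s)\, ds$ after integration. Matching the combinatorial count term-by-term against $T_1 + T_2$ is what makes the $A+B$ identity substantially more subtle than the $A$ identity.
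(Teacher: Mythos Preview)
Your treatment of $A(\vo,u)$ is correct and essentially coincides with the paper's: the paper also decomposes the ancestral line of $x$ at $z$ and at $y$, records the first edge $e_i$ out of $z$ and the last edge $e_j$ into $y$, and then counts. The only cosmetic difference is that the paper phrases the count via infinitesimal time intervals rather than the Beta identity.

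For $A(\vo,u)+B(\vo,u)$, however, your plan misidentifies what $e_i$, $e_j$, and the two bracketed terms represent, and the resulting bookkeeping claim (i) is false. In the paper's derivation the index $e_j$ is \emph{not} the edge from $u^*$ to the $y$--child; it is the final edge from $p$ (the parent of $y$) into $y$, exactly as in the $A$ computation. The index $e_i$ is the edge from $l=u^*$ to the particle $c$, defined as the \emph{first-born} child of $l$ lying on only one of the two lineages. The two terms then correspond to the dichotomy ``$c$ lies on the $x$--lineage'' (Graph~2, giving $m(w-v,t)\,m(w-v-e_i-e_j,t)$) versus ``$c$ lies on the $z$--lineage'' (Graph~3, giving $m(w-v-e_j,t)\,m(w-v-e_i,t)$). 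In particular the first term is \emph{not} $\mathcal{T}_a$ together with the diagonal $e_i=e_j$ of $\mathcal{T}_b$; it is all of $\mathcal{T}_a$ (the case $z=l$) together with the half of $\mathcal{T}_b$ in which the $x$--child of $l$ is born before the $z$--child.

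This ``first-born child'' choice is precisely what makes the formula come out with only two time integrals. Taking $s$ to be the birth time of $c$ forces every particle on the \emph{other} lineage from $l$ to be born after time $s$, so that lineage is a fresh BTP from $v$ run for time $t$ and contributes a clean factor $m(w-v,t)$ or $m(w-v-e_j,t)$. With your parametrisation (both $e_i$ and $e_j$ at $u^*$) there are three birth times in play --- those of the $z$--child, the $y$--child, and $y$ itself --- with no a priori ordering between the first two; integrating over the Poisson birth times of both children does not collapse to the stated two-variable integral, and the ordering constraint ``$z$ born before $y$'' does not split along the line $e_i=e_j$ versus $e_i\neq e_j$. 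The invocation of Lemma~\ref{lemma:derivatives} is also premature here: that identity is used downstream (in Propositions~\ref{prop:Alimit} and the following one) to \emph{simplify} the already-established formula, not to derive it.
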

\begin{proof}
Let us start by considering $A(\vo, u)$. For any $(x, y, z)\in \mathcal{T}_a$ there are well-defined particles $c$, the particle subsequent to $z$ in the ancestral line of $x$, and $p$, the parent of $y$. We note that $y$ is not a child of $z$ as then $y$ and $z$ would not be located at the same vertex, hence $c$ must be an ancestor of $p$. This means that the for each triple $(x, y, z)$, the particles $(x, y, z, c, p)$ must be related as illustrated in Graph 1 of Figure \ref{fig:heredity}.

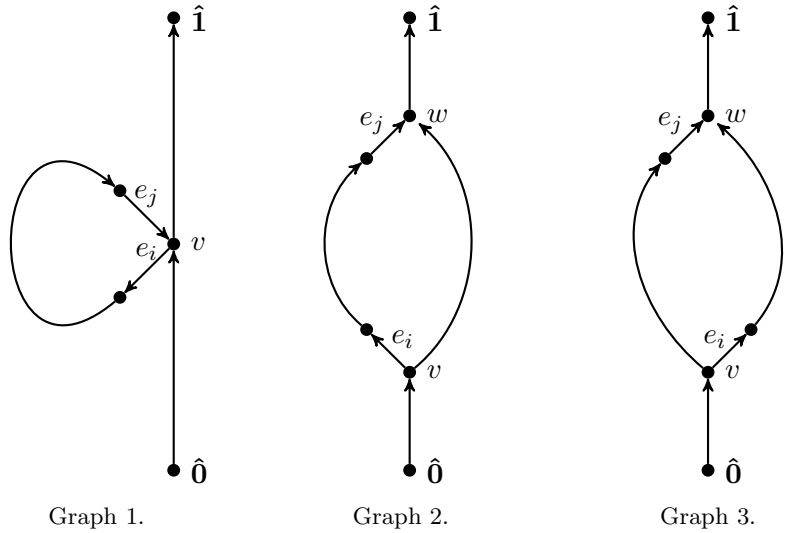
\begin{figure}[ht]
\captionsetup[subfigure]{labelformat=empty}

\begin{center}
\subfloat[][Graph 1.]{
\begin{tikzpicture}[->,>=stealth',shorten >=0pt,auto,node distance=3cm,thick,main node/.style={circle,fill=black,draw,minimum size=4pt,inner sep=0pt}]

\node[main node] (vz) [label=right:$\vz$] {};
\node[main node] (v) [above of=vz] [label=right:$v$] {};
\node[main node] (vo) [above of=v] [label=right:$\vo$] {};
\node[main node] (c) [below left of=v,node distance=1.0cm] {};
\node[main node] (p) [above left of=v,node distance=1.0cm] {};

\path
(vz) edge (v)
(v) edge node[above] {$e_i$} (c)
(c) edge [out=220,in=140,looseness=4] (p)
(p) edge node[above] {$e_j$} (v)
(v) edge (vo);
\end{tikzpicture}
}
\hspace{0.85cm}
\subfloat[][Graph 2.]{
\begin{tikzpicture}[->,>=stealth',shorten >=0pt,auto,node distance=2cm,thick,main node/.style={circle,fill=black,draw,minimum size=4pt,inner sep=0pt}]

\node[main node] (vz) [label=right:$\vz$] {};
\node[main node] (v) [above of=vz,node distance=1.3cm] [label=right:$v$] {};
\node[main node] (w) [above of=v,node distance=3.4cm] [label=right:$w$] {};
\node[main node] (vo) [above of=w,node distance=1.3cm] [label=right:$\vo$] {};
\node[main node] (c) [above left of=v,node distance=0.8cm] {};
\node[main node] (p) [below left of=w,node distance=0.8cm] {};

\path
(vz) edge (v)
(v) edge node[above right=-0.1cm] {$e_i$} (c)
(c) edge [out=140,in=220,looseness=1] (p)
(v) edge [out=40,in=-40,looseness=1,shorten >=2pt] (w)
(p) edge node[above left=-0.1cm] {$e_j$} (w)
(w) edge (vo);
\end{tikzpicture}
}
\hspace{1.15cm}
\subfloat[][Graph 3.]{
\begin{tikzpicture}[->,>=stealth',shorten >=0pt,auto,node distance=2cm,thick,main node/.style={circle,fill=black,draw,minimum size=4pt,inner sep=0pt}]

\node[main node] (vz) [label=right:$\vz$] {};
\node[main node] (v) [above of=vz,node distance=1.3cm] [label=right:$v$] {};
\node[main node] (w) [above of=v,node distance=3.4cm] [label=right:$w$] {};
\node[main node] (vo) [above of=w,node distance=1.3cm] [label=right:$\vo$] {};
\node[main node] (c) [above right of=v,node distance=0.8cm] {};
\node[main node] (p) [below left of=w,node distance=0.8cm] {};

\path
(vz) edge (v)
(v) edge node[above left=-0.1cm] {$e_i$} (c)
(v) edge [out=140,in=230,looseness=1] (p)
(c) edge [out=50,in=-40,looseness=1,shorten >=2pt] (w)
(p) edge node[above left=-0.1cm] {$e_j$} (w)
(w) edge (vo);
\end{tikzpicture}
}
\caption{Illustration of the possible configurations of ancestral lines of elements in $\mathcal{T}_a$ and $\mathcal{T}$ respectively. Graph 1 shows the configuration of elements in $\mathcal{T}_a$. Here $z$ is located at $v$, $c$ is a child of $z$ at $v+e_i$, $p$ is a descendant of $c$ at $v-e_j$, $y$ is a child of $p$ at $v$, and $x$ a descendant of $y$ at $\vo$. The possible configurations corresponding to elements in $\mathcal{T}$ are shown in Graphs 2 and 3. After the ancestral lines of $x$ and $z$ split, the unique ancestors of $x$ and $z$ are given by the left-most and right-most paths respectively. In both configurations, the last common ancestor of $x$ and $z$, $l$, is located at $v$, the first particle which is an ancestor of precisely one of $x$ and $z$, $c$, is located at $v+e_i$, the parent of $y$, $p$, is located at $w-e_j$, $y$ and $z$ are located at $w$, and $x$ is located at $\vo$.
}\label{fig:heredity}
\end{center}
\end{figure}

Fix $v \in \hc{n}$, $1\leq i, j \leq n$, and infinitesimal time intervals $(s, s+ds]$ and $(s+t, s+t+dt]$ where $0\leq s < s+t < u$. We now count the expected number of such quintuples of particles where the common location of $y$ and $z$ is $v$, the location of $c$ is $v+e_i$, the location of $p$ is $v-e_j$, $c$ is born during $(s, s+ds]$ and $y$ is born during $(s+t, s+t+dt]$. A particle is a potential $z$ if it is located at $v$ at time $s$. For each potential $z$, a potential $c$ is a child of $z$ born at $v+e_i$ during the time interval $(s, s+ds]$. For each pair of a potential $z$ and $c$, a particle is a potential $p$ if it is a descendant of $c$ located at $v-e_j$ at time $s+t$. For each potential triple $(z, c, p)$, a particle is a potential $y$ if it is a child of $p$ born at $v$ during $(s+t, s+t+dt]$. Lastly, for each potential quadruple $(z, c, p, y)$ each particle $x$ at $\vo$ at time $u$ which is a descendant of $y$ forms a triple in $\mathcal{T}_a$. By computing the expected number of potential particles in each step, we see that the expected number of elements in $\mathcal{T}_a$ corresponding to fixed $v, i, j$ and fixed time intervals $(s, s+ds]$ and $(s+t, s+t+dt]$ is
\begin{equation}
m(v, s)\,ds\,m(-e_j-e_i, t)\,dt\,m(v, u-s-t).
\end{equation}
Equation \eqref{eq:Avousumsintegralsgalore} follows by integrating over all $s, t > 0$ such that $s+t<u$ and summing over all $v\in \hc{n}$ and all $1\leq i,j\leq n$.

We now turn to the formula for $\mathbb{E}\abs{\mathcal{T}}$. For each triple $(x, y, z)\in \mathcal{T}$ we define the particles $l$, the last common ancestor of $x$ and $z$, $c$ the first particle which is an ancestor of precisely one of $x$ and $z$, and $p$ the parent of $y$. Note that $c$ must be a child of $l$. Similar to the case of $\mathcal{T}_a$, we note that we cannot have $c=y$. In order to see this, we assume that $c=y$. As $c$ is the first particle to be an ancestor of precisely one of $x$ and $z$, but $z$ is older than $y$ it follows that $z$ must be an ancestor of $x$, and hence $l=z$. But then, $y=c$ and $z=l$ are located at adjacent vertices, which is a contradiction.

In order to count the elements in $\mathcal{T}$, we need to consider two cases depending on whether $c$ is an ancestor of $x$ or of $z$. In the former case, as $c\neq y$,  $c$ must be an ancestor of $p$ and so the particles $x, y, z, l, c$ and $p$ must be related as illustrated in Graph 2 of Figure \ref{fig:heredity}. Similarly, it is clear that in the latter case, the particles must be related as illustrated in Graph 3 in Figure \ref{fig:heredity}.

We now fix $v, w \in \hc{n}$, $1\leq i, j\leq n$ and time intervals $(s, s+ds]$ and $(s+t, s+t+dt]$ where $0\leq s < s+t < u$, and consider the elements in $\mathcal{T}$ where $l$ is located at $v$, $c$ is located at $v+e_i$, $p$ is located at $w-e_j$, $y$ and $z$ are located at $w$, $c$ is born during $(s, s+ds]$ and $y$ is born during $(s+t, s+t+dt]$. We start by counting the triples where $c$ is an ancestor of $x$. Here, a particle is a potential $l$ if it is located at $v$ at time $s$. For each potential $l$, a particle is a corresponding potential $c$ if it is a child of $l$ born at $v+e_i$ during $(s, s+ds]$. Hence the expected number of pairs of potential $l$:s and $c$:s is $m(v, s)\,ds$. For each pair of a potential $l$ and $c$, we see that if one conditions on the BTP at the time of birth of $c$, the corresponding potential triples $(p, y, x)$ originates from $c$ whereas the potential $z$:s originate from $l$. Hence the potential triples $(p, y, x)$ occur independently of the potential $z$:s. Furthermore, for each pair of a potential $l$ and $c$, we see that the expected number of potential $(p, y, x)$ is $m(w-e_j-v-e_i, t)\,dt\,m(\vo-w, u-s-t)$, and the expected number of potential $z$:s is $m(w-v, t)$. Combining this, we see that the expected number of elements in $\mathcal{T}$ corresponding to fixed $v$, $w$, $i$, $j$, fixed time intervals as above and where $c$ is an ancestor of $x$ is
\begin{equation}
m(v, s)\,ds\,m(w-v, t) m(w-e_j-v-e_i, t)\,dt\,m(\vo-w, u-s-t).
\end{equation}
Proceeding in a similar manner for the case where $c$ is an ancestor of $z$ we see that the expected number of corresponding elements in $\mathcal{T}$ is
\begin{equation}
m(v, s)\,ds\,m(w-e_j-v, t) m(w-v-e_i, t)\,dt\,m(\vo-w, u-s-t).
\end{equation}
The proposition follows by summing these expressions over all $v, w \in \hc{n}$, all $1\leq i, j\leq n$ and integrating over all $s, t > 0$ such that $s+t<u$.
\end{proof}

\begin{remark}
In the proof of Proposition \ref{prop:ABformulas}, the only crucial property of the underlying graph is that it should not contain loops (if the graph does contain loops our counting argument may miss elements in $\mathcal{T}_a$ and $\mathcal{T}$). Hence this can directly be generalized to any loop-free graph by replacing the sums over $i$ and $j$ by sums over the corresponding neighborhoods.
\end{remark}

\begin{proposition}\label{prop:Alimit}
For $\fpptime=\ln\left(1+\sqrt{2}\right)$, we have $A(\vo, \fpptime) \rightarrow \frac{\fpptime}{\sqrt{2}}$ as $n \rightarrow \infty$.
\end{proposition}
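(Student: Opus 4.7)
The plan is to start from the explicit formula \eqref{eq:Avousumsintegralsgalore}, reduce $A(\vo, \fpptime)$ to a one-dimensional integral using the convolution identity, and then perform a Laplace-type asymptotic analysis as $n \to \infty$.

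First, I would apply Lemma \ref{lemma:conv} to collapse the sum over $v$, obtaining
\begin{equation*}
\sum_{v \in \hc{n}} m(v, s)\,m(\vo - v, \fpptime - s - t) = m(\vo, \fpptime - t) = (\sinh(\fpptime - t))^n,
\end{equation*}
which notably does not depend on $s$, so integration in $s$ over $[0, \fpptime - t]$ produces a factor $\fpptime - t$. Next I would split the inner sum by whether $i = j$: since $e_i + e_j = \vz$ when $i = j$ and has Hamming weight $2$ otherwise, \eqref{eq:mvthypercube} gives
\begin{equation*}
\sum_{i=1}^n \sum_{j=1}^n m(e_i + e_j, t) = n(\cosh t)^n + n(n-1)(\sinh t)^2(\cosh t)^{n-2}.
\end{equation*}
Combining, I arrive at the closed-form representation
\begin{equation*}
A(\vo, \fpptime) = \int_0^\fpptime (\fpptime - t)(\sinh(\fpptime - t))^n \bigl[n(\cosh t)^n + n(n-1)(\sinh t)^2(\cosh t)^{n-2}\bigr]\,dt.
\end{equation*}

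For the asymptotics I would rescale $t = x/n$, absorbing a $1/n$ from the Jacobian into the bracket. Using $\sinh \fpptime = 1$ and $\cosh \fpptime = \sqrt{2}$, Taylor expansion gives $\sinh(\fpptime - x/n) = 1 - \sqrt{2}\,x/n + O(1/n^2)$, so $(\sinh(\fpptime - x/n))^n \to e^{-\sqrt{2}x}$ pointwise. Meanwhile $(\cosh(x/n))^n \to 1$ and $(n-1)(\sinh(x/n))^2 \to 0$, so the rescaled bracket tends to $1$ and the rescaled integrand converges pointwise on $[0, \infty)$ to $\fpptime\, e^{-\sqrt{2}x}$. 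If dominated convergence applies, the limit equals $\fpptime \int_0^\infty e^{-\sqrt{2}x}\,dx = \fpptime/\sqrt{2}$, as required.

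The main obstacle is producing an $n$-uniform integrable dominating function. The key estimate is the product-to-sum identity
\begin{equation*}
\sinh(\fpptime - y)\cosh y = \tfrac{1}{2}\bigl(1 + \sinh(\fpptime - 2y)\bigr) \leq 1 - y \quad \text{for } y \in [0, \fpptime/2],
\end{equation*}
where the last inequality uses $\sinh(\fpptime - 2y) \leq \sinh \fpptime - 2y = 1 - 2y$, which in turn follows from $\cosh \geq 1$ and the fundamental theorem of calculus. Raising to the $n$-th power and setting $y = x/n$ yields the uniform bound $(\sinh(\fpptime - x/n))^n(\cosh(x/n))^n \leq e^{-x}$ on $x \in [0, n\fpptime/2]$, giving an integrable dominating function of order $e^{-x}$ for the first term. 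The tail $x \in [n\fpptime/2, n\fpptime]$ contributes at most $O(2^{-n})$ since there $\sinh(\fpptime - x/n)\cosh(x/n) \leq 1/2$. For the second term I would use the elementary bound $\sinh(x/n) \leq (x/n)\cosh(x/n)$ to reduce to $(x/n)^2(\cosh(x/n))^n$ times the first-term bound, producing a uniform dominating function of order $x^2 e^{-x}/n$, whose integral vanishes in the limit. Combining these bounds with dominated convergence concludes the proof.
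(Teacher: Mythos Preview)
Your proposal is correct and follows essentially the same route as the paper: collapse the $v$-sum via the convolution identity, evaluate $\sum_{i,j} m(e_i+e_j,t)$ explicitly (the paper does this via Lemma~\ref{lemma:derivatives}, recognizing it as $\frac{d^2}{dt^2}m(\vz,t)$), rescale $t=x/n$, and apply dominated convergence. The only notable difference is in the dominating function: the paper observes that $f(t)=\ln(\sinh(\fpptime-t)\cosh t)$ is concave with $f(0)=0$, $f'(0)=-\sqrt{2}$, giving $e^{nf(t)}\le e^{-\sqrt{2}nt}$ uniformly on $[0,\fpptime]$, whereas you obtain $e^{nf(t)}\le e^{-nt}$ on $[0,\fpptime/2]$ via the product-to-sum identity and handle the tail $[\fpptime/2,\fpptime]$ separately (your tail bound should read $O(n^2 2^{-n})$ rather than $O(2^{-n})$ once the second term and the length of the interval are accounted for, but this is harmless).
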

\begin{proof}
By reordering the sums and integrals in \eqref{eq:Avousumsintegralsgalore} we have
\begin{equation}
A(\vo, \fpptime) =  \int_0^\infty \int_0^\infty \mathbbm{1}_{s+t\leq \fpptime} \sum_{v\in \mathbb{Q}_n} m(v, s) m(\vo-v, \fpptime-s-t) \sum_{i=1}^n \sum_{j=1}^n m(e_j-e_i, t)\,ds\,dt.
\end{equation}
Applying Lemmas \ref{lemma:derivatives} and \ref{lemma:conv}, the right-hand side simplifies to
\begin{equation}
\int_0^\infty \int_0^\infty \mathbbm{1}_{s+t\leq \fpptime} m(\vo, \fpptime-t) \frac{d^2}{dt^2} m(\vz, t)\,ds\,dt = \int_0^\fpptime (\fpptime-t) m(\vo, \fpptime-t) \frac{d^2}{dt^2} m(\vz, t)\,dt,
\end{equation}
and by plugging in the analytical formula \eqref{eq:mvthypercube} for $m(v, t)$ we get
\begin{align}
\begin{split}\label{eq:Avoutractable}
A(\vo, \fpptime) &= \int_0^\fpptime (\fpptime-t) \left(\sinh(\fpptime-t)\right)^n \frac{d^2}{dt^2} \left(\cosh t\right)^n \,dt\\
&= \int_0^\fpptime (\fpptime-t) \left(\sinh(\fpptime-t)\right)^n \left( n + n(n-1)\left(\tanh t\right)^2\right)\left(\cosh t\right)^n \,dt\\
&= \int_0^\fpptime (\fpptime-t) \left( n + n(n-1)\left(\tanh t\right)^2\right)e^{n f(t)} \,dt,
\end{split}
\end{align}
where $f(t) \defeq \ln\left( \sinh(\fpptime-t)\cosh t \right)$.

What follows is a textbook application of the Lebesgue dominated convergence theorem. We begin examining the function $f$. The first and second derivatives of $f$ are given by
\begin{align}
f'(t) &= -\coth(\fpptime-t) + \tanh t\\
f''(t) &= -\csch(\fpptime-t)^2 + \sech(t)^2.
\end{align}
As $\sech t \leq 1$ for all $t\in \mathbb{R}$ and $\csch t \geq 1$ for $0<t<\fpptime$, it follows that $f''(t) <0$ for $0<t<\fpptime$. Hence $f$ is concave in this interval, so in particular $f(t) \leq f(0) + f'(0)\,t = -\sqrt{2}\,t$. Furthermore, we have $\tanh t \leq C t$ for some appropriate $C>0$.

Substituting $t$ by $z=nt$ in \eqref{eq:Avoutractable}, we obtain
\begin{equation}
A(\vo, \fpptime) = \int_0^\infty \mathbbm{1}_{z \leq n \fpptime}\left(\fpptime-\frac{z}{n}\right)\left(1 + (n-1) \tanh\left(\frac{z}{n}\right)^2\right) e^{n f\left(\frac{z}{n}\right)}\,dz.
\end{equation}
It is clear that the integrand is bounded for all $n$ by $\fpptime \left(1+ C z^2\right)e^{-\sqrt{2}\,s}$, which is integrable over $[0, \infty)$. Hence, by dominated convergence, it follows that
\begin{equation}
A(\vo, \fpptime) \rightarrow \int_0^\infty \fpptime e^{-\sqrt{2}\,z} \,dz = \frac{\fpptime}{\sqrt{2}} \text{ as } n\rightarrow \infty.
\end{equation}
\end{proof}

\begin{proposition}
For $\fpptime=\ln\left(1+\sqrt{2}\right)$ we have
\begin{equation}
A(\vo, \fpptime)+B(\vo, \fpptime)\rightarrow \frac{\fpptime e^\fpptime}{\sqrt{2}}+ \frac{1}{3-2\sqrt{2}}\text{ as }n\rightarrow \infty.
\end{equation}
Hence, as $n\rightarrow \infty$ we have $B(\vo, \fpptime) \rightarrow \fpptime + \frac{1}{3-2\sqrt{2}}$.
\end{proposition}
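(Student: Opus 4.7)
The plan is to follow the template of Proposition~\ref{prop:Alimit}: reduce \eqref{eq:Bvousumsintegralsgalore} to a single integral over $t$, express the integrand in closed form as a function of $n$ and $t$, and extract its limit by a Laplace-type argument concentrated at the two endpoints $t = 0$ and $t = \fpptime$. The reduction proceeds in two moves. First, substituting $u := w - v$ and applying Lemma~\ref{lemma:conv} gives $\sum_v m(v, s)\,m((\vo - u) + v, \fpptime - s - t) = m(\vo - u, \fpptime - t)$, which no longer depends on $s$. Second, the inner sum over $i, j$ (using $e_i \equiv -e_i$ in $\mathbb{Z}_2^n$) matches the right-hand side of the second identity in Lemma~\ref{lemma:derivatives} exactly, collapsing to $\tfrac{1}{2}\tfrac{d^2}{dt^2}m(u, t)^2$. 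The trivial $s$-integration then yields a factor of $\fpptime - t$, leaving
\[
A(\vo, \fpptime) + B(\vo, \fpptime) = \int_0^\fpptime (\fpptime - t)\,I(t)\,dt, \qquad I(t) = \sum_u m(\vo - u, \fpptime - t)\cdot\tfrac{1}{2}\tfrac{d^2}{dt^2}m(u, t)^2.
\]

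Next, I would compute $I(t)$ in closed form. Set $\alpha(t) := \cosh(\fpptime - t)\sinh^2 t$, $\beta(t) := \sinh(\fpptime - t)\cosh^2 t$, and $\phi(t) := \alpha(t) + \beta(t)$. Expanding $m(\vo - u, \fpptime - t)\,m(u, t)^2 = \alpha^{|u|}\beta^{n-|u|}$ and invoking the binomial theorem gives $\sum_u m(\vo - u, \fpptime - t)\,m(u, t)^2 = \phi(t)^n$. Differentiating $m(u, t)^2 = \sinh^{2k}(t)\cosh^{2(n-k)}(t)$ twice in $t$ and expanding in powers of $k$ produces three weights $k(2k-1)$, $4k(n-k)+n$, $(n-k)(2n-2k-1)$; applying the identities $\sum_k \binom{n}{k}k^j(n-k)^\ell \alpha^k \beta^{n-k}$ for $j + \ell \le 2$ (yielding $n\alpha\phi^{n-1}$, $n(n-1)\alpha^2\phi^{n-2}$, $n(n-1)\alpha\beta\phi^{n-2}$ and their symmetric counterparts) together with the hyperbolic identities $(\cosh \pm \sinh)(\fpptime - t) = e^{\pm(\fpptime - t)}$ to collapse the result, I expect to find
\[
I(t) = 2n(n-1)\sinh^2(t)\cosh^2(t)\,e^{2(\fpptime - t)}\,\phi(t)^{n-2} + 2n\,\phi(t)^n + n\,e^{t - \fpptime}\,\phi(t)^{n-1}.
\]

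The final step is Laplace-type asymptotics. The function $\phi$ satisfies $\phi(0) = \sinh\fpptime = 1$, $\phi(\fpptime) = \sinh^2\fpptime = 1$, $\phi'(0) = -\cosh\fpptime = -\sqrt 2$, $\phi'(\fpptime) = 2(\sqrt 2 - 1)$, and $\phi(t) < 1$ on $(0, \fpptime)$ (established from the endpoint behaviour together with a direct check at, say, the midpoint). Thus $\phi^n$ concentrates near $t = 0$ and $t = \fpptime$. For the first term of $I(t)$, the factor $\sinh^2 t$ kills the near-$0$ contribution; substituting $w = n(\fpptime - t)$ and applying dominated convergence yields $4\int_0^\infty w\,e^{-2(\sqrt 2 - 1)w}\,dw = \frac{1}{(\sqrt 2 - 1)^2} = \frac{1}{3 - 2\sqrt 2}$. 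For the second and third terms, the factor $\fpptime - t$ kills the near-$\fpptime$ contribution; substituting $z = nt$ and applying dominated convergence yields $\sqrt 2\,\fpptime$ and $\fpptime e^{-\fpptime}/\sqrt 2$, respectively. Using $e^{-\fpptime} = \sqrt 2 - 1$ one checks
\[
\sqrt 2\,\fpptime + \frac{\fpptime\,e^{-\fpptime}}{\sqrt 2} = \frac{\fpptime(1 + \sqrt 2)}{\sqrt 2} = \frac{\fpptime\,e^{\fpptime}}{\sqrt 2},
\]
giving the claimed limit of $A + B$. The statement for $B$ alone then follows by subtracting Proposition~\ref{prop:Alimit}.

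The main obstacle is the middle step: after differentiating $m(u,t)^2$ twice and summing against $m(\vo - u, \fpptime - t)$, one must correctly collect the three resulting quadratic polynomial sums in $\alpha, \beta$ and spot the hyperbolic identities that compress them into the perfect-square coefficient $\sinh^2(t)\cosh^2(t)\,e^{2(\fpptime - t)}$ in front of $\phi^{n-2}$ and the residue $e^{t - \fpptime}$ in front of $\phi^{n-1}$. Without this collapse the subsequent Laplace-style analysis (which relies on $\phi$ being a single function with $\phi(0) = \phi(\fpptime) = 1$) would be unwieldy.
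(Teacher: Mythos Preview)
Your proposal is correct and follows essentially the same route as the paper: reduce via Lemmas~\ref{lemma:derivatives} and~\ref{lemma:conv} to $\int_0^\fpptime(\fpptime-t)\,I(t)\,dt$ with the same function $\phi$ (which the paper writes as $e^{f(t)}=\tfrac12 e^{\fpptime-t}\cosh 2t-\tfrac12 e^{-\fpptime+t}$), then perform endpoint Laplace asymptotics at $t=0$ and $t=\fpptime$. Your three-term expression for $I(t)$ is algebraically equivalent to the paper's two-term form $n e^{\fpptime-t}\cosh 2t\,\phi^{n-1}+2n(n-1)\sinh^2 t\cosh^2 t\,e^{2(\fpptime-t)}\phi^{n-2}$, since $2\phi+e^{t-\fpptime}=e^{\fpptime-t}\cosh 2t$.

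The one methodological difference worth noting is that the paper sidesteps precisely what you flag as the ``main obstacle'': rather than differentiating $m(u,t)^2$ term-by-term in $k$ and then collecting binomial sums, the paper temporarily replaces $\fpptime-t$ by an independent variable $z$, pulls $\partial^2/\partial t^2$ outside the sum, evaluates $\sum_u m(\vo-u,z)m(u,t)^2=\bigl(\tfrac12 e^z\cosh 2t-\tfrac12 e^{-z}\bigr)^n$ by the binomial theorem, and only then differentiates and substitutes back $z=\fpptime-t$. This yields the closed form in two lines and avoids the quadratic-in-$k$ bookkeeping. For the Laplace step, the paper also secures the global bound $\phi\le 1$ on $[0,\fpptime]$ via convexity of $\ln\phi$ (computing $f''>0$), which is slightly more robust than your ``endpoint behaviour plus midpoint check'' and directly furnishes the dominating functions.
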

\begin{proof}
By reordering the sums in \eqref{eq:Avousumsintegralsgalore} and applying Lemma \ref{lemma:derivatives} we see that $A(\vo, \fpptime)+B(\vo, \fpptime)$ can be expressed as
\begin{equation}
\frac{1}{2} \sum_{v\in \mathbb{Q}_n} \sum_{w\in \mathbb{Q}_n} \int_0^\infty \int_0^\infty \mathbbm{1}_{s+t\leq \fpptime} m(v, s) m(\vo-w, \fpptime-s-t) \frac{d^2}{dt^2} m(w-v, t)^2\,ds\,dt.
\end{equation}
Letting $\Delta = w-v$, this sum can be rewritten as
\begin{equation}
\frac{1}{2} \sum_{v\in \mathbb{Q}_n} \sum_{ \Delta \in \mathbb{Q}_n} \int_0^\infty \int_0^\infty \mathbbm{1}_{s+t\leq \fpptime} m(v, s) m(\vo-\Delta + v, \fpptime-s-t) \frac{d^2}{dt^2} m(\Delta, t)^2\,ds\,dt,
\end{equation}
which by Lemma \ref{lemma:conv} simplifies to
\begin{equation}
\frac{1}{2} \int_0^\fpptime (\fpptime-t) \sum_{ \Delta \in \mathbb{Q}_n} m(\vo-\Delta, \fpptime-t) \frac{d^2}{dt^2} m(\Delta, t)^2\,dt.
\end{equation}
To evaluate the sum in the above integral we use a small trick. Let us replace $\fpptime-t$ in this sum by $z$ which we consider as a variable not depending on $t$. Then
\begin{align*}
\sum_{ \Delta \in \mathbb{Q}_n} m(\vo-\Delta, z) \frac{d^2}{dt^2} m(\Delta, t)^2 &= \frac{\partial^2}{\partial t^2} \sum_{ \Delta \in \mathbb{Q}_n} m(\vo-\Delta, z) m(\Delta, t)^2.
\end{align*}
By grouping all terms with $\abs{\Delta} = k$ we get
\begin{align*}
\sum_{ \Delta \in \mathbb{Q}_n} m(\vo-\Delta, z) m(\Delta, t)^2 &= \sum_{k=0}^n {n \choose k} \left(\sinh z\right)^k \left( \cosh z \right)^{n-k} \left(\sinh t\right)^{2n-2k} \left( \cosh t \right)^{2k}\\
&= \sum_{k=0}^n {n \choose k} \left(\sinh z \left( \cosh t\right)^2 \right)^k \left( \cosh z \left(\sinh t\right)^2 \right)^{n-k}\\
&= \left(\sinh z \left( \cosh t\right)^2 + \cosh z \left(\sinh t\right)^2 \right)^n\\
&= \left(\frac{1}{2} e^{z} \cosh 2t - \frac{1}{2} e^{-z}\right)^n.
\end{align*}
Note that $\frac{1}{2} e^{z} \cosh 2t - \frac{1}{2} e^{-z} > 0$ for any $t, z \geq 0$. Hence
\begin{align*}
&\sum_{ \Delta \in \mathbb{Q}_n} m(\vo-\Delta, z) \frac{d^2}{dt^2} m(\Delta, t)^2= \frac{ \partial^2}{\partial t^2} \left(\frac{1}{2} e^{z} \cosh 2t - \frac{1}{2} e^{-z}\right)^n\\
&\qquad=2n e^{z} \cosh t \left( \frac{1}{2} e^{z} \cosh 2t - \frac{1}{2} e^{-z} \right)^{n-1} + n(n-1) e^{2z} \left(\sinh 2t\right)^2 \left( \frac{1}{2} e^{z} \cosh 2t - \frac{1}{2} e^{-z} \right)^{n-2}.
\end{align*}
Letting
\begin{align}
f(t) &= \ln \left( \frac{1}{2} e^{\fpptime-t} \cosh 2t - \frac{1}{2} e^{-\fpptime+t} \right)\\
g(t) &= 2 e^{\fpptime-t} \cosh t\, e^{- f(t)} \\
h(t) &= e^{2\fpptime -2t} \left(\sinh t\right)^2e^{-2f(t)}
\end{align}
we can write
\begin{equation}\label{eq:Bfgh}
A(\vo, \fpptime) + B(\vo, \fpptime) = \frac{1}{2} \int_0^\fpptime (\fpptime-t) \left( n\, g(t) + n(n-1)\, h(t) \right)e^{n f(t) }\,dt.
\end{equation}

One can check that $f(0)=f(\fpptime)=0$, $f\left(\frac{1}{2}\right) < -\frac{1}{5}$, and that $f$ has derivatives
\begin{equation}
f'(t) = -1 + 2\frac{\sinh 2t - e^{-2\fpptime+2t}}{\cosh 2t - e^{-2\fpptime+2t}}
\end{equation}
and
\begin{equation}
f''(t) = 4\frac{1-2e^{-2\fpptime}}{\left(\cosh 2t - e^{-2\fpptime+2t}\right)^2}.
\end{equation}
Note that $\frac{1}{2} e^{\fpptime-t} \cosh 2t - \frac{1}{2} e^{-\fpptime+t} = \sinh(\fpptime-t) \left(\cosh t\right)^2 + \cosh(\fpptime-t)\left(\sinh t\right)^2 > 0$ for $t\in[0, \fpptime]$. Hence it follows that $f(t)$ is convex. Furthermore, for $0\leq t \leq \fpptime$, $g(t)$ and $h(t)$ are non-negative bounded functions and $h(t) = O\left( t^2 \right)$.

To evaluate the integral in equation \eqref{eq:Bfgh}, we divide it into two integrals, one over the interval $\left[0, \frac{1}{2}\right]$, and one over $\left[\frac{1}{2}, \fpptime\right]$, that is into the two integrals
\begin{align}
\begin{split}
& \int_0^{\frac{1}{2}} (\fpptime-t) \left( n\,g(t)+n(n-1)\,h(t) \right)e^{n f(t)}\,dt\\
&\qquad \overset{z=nt }= \int_0^\infty
\mathbbm{1}_{z\leq \frac{n}{2}}\left( \fpptime-\frac{z}{n} \right) \left( g\left(\frac{z}{n}\right) + (n-1)\,h\left(\frac{z}{n}\right) \right)e^{n f\left(\frac{z}{n}\right)}\,dz
\end{split}
\intertext{and}
\begin{split}
& \int_{\frac{1}{2}}^{\fpptime} (\fpptime-t) \left( n\,g(t)+n(n-1)\,h(t) \right)e^{n f(t)}\,dt\\
&\qquad \overset{z=n(\fpptime-t)}= \int_{0}^\infty \mathbbm{1}_{z\leq {\left(\fpptime-\frac{1}{2}\right)n}} z \left(  \frac{1}{n}\,g\left(\fpptime-\frac{z}{n}\right)+\frac{n-1}{n}\,h\left(\fpptime-\frac{z}{n}\right) \right)e^{n f\left(\fpptime-\frac{z}{n}\right)} \,dz.
\end{split}
\end{align}
Now, using the convexity of $f(t)$ it is a standard calculation to show that the integrands of these expressions are uniformly dominated by $C\left( 1 + t^2 \right)e^{-\lambda t}$ and $C t e^{-\lambda t}$ respectively, for appropriate positive constants $\lambda$ and  $C$. Hence, by the Lebesgue dominated convergence theorem, these integrals converge to
\begin{equation}
\int_0^\infty 2 \fpptime e^{\fpptime + f'(0)z} \, dz = \frac{2 \fpptime e^{\fpptime}}{-f'(0)} = \sqrt{2}\,\fpptime e^\fpptime
\end{equation}
and
\begin{equation}
\int_0^\infty z \left(\sinh 2\fpptime\right)^2 e^{-f'(\fpptime)z}\,dz = \frac{8}{ f'(\fpptime)^2} = \frac{2}{3-2\sqrt{2}}
\end{equation}
respectively, as $n\rightarrow \infty$. We conclude that
\begin{equation}
A(\vo, \fpptime)+B(\vo, \fpptime) \rightarrow \frac{1}{2}\left( \sqrt{2}\,\fpptime e^\fpptime + \frac{2}{3-2\sqrt{2}}\right) \text{ as } n\rightarrow\infty.
\end{equation}
\end{proof}

\section{Proof of Theorem \ref{thm:unorientedFPP} }\label{sec:bootstrap}
In order to bound $\|T_n-\fpptime\|_p$ it is natural to treat the problems of bounding $T_n-\fpptime$ from above and below separately. To this end, we let $T_n^+$ and $T_n^-$ denote the positive and negative part of $T_n-\fpptime$ respectively, that is, $T_n^+$ is the maximum of $T_n-\fpptime$ and $0$ and $T_n^-$ is the maximum of $\fpptime-T_n$ and $0$. Hence, we can bound $\|T_n-\fpptime\|_p$ by $\|T_n^+\|_p+\|T_n^-\|_p$. We will begin by proving two simple propositions. The first shows that the variance of $T_n$ and the $L^p$-norm of $T_n-\fpptime$ for any $1\leq p < \infty$ are $\Omega\left(\frac{1}{n}\right)$. The second proposition uses the lower bound on $T_n$ obtained by Durrett to prove that $\|T_n^-\|_p=O\left(\frac{1}{n}\right)$. The remaining part of the section will be dedicated to bounding $\|T_n^+\|_p$.

\begin{proposition}
$T_n$ has fluctuations of order at least $\frac{1}{n}$.
\end{proposition}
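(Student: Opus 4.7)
My plan is to show that $T_n$ has a density bounded above by $n$, and to deduce both the variance and the $L^p$ estimates from this single fact.

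To obtain the density bound I would condition on all edge weights except those incident to $\vo$. Any path attaining the first-passage time may be assumed to visit $\vo$ only at its endpoint, so it ends with a single edge at $\vo$. Writing $W_1, \dots, W_n$ for the weights of these $n$ edges (which are iid $\mathrm{Exp}(1)$) and, for each $i$, $Y_i$ for the first-passage time from $\vz$ to the $i$-th neighbor of $\vo$ in the graph obtained by deleting all $n$ edges at $\vo$, we have
\begin{equation*}
T_n = \min_{i=1}^{n} (Y_i + W_i),
\end{equation*}
with $(Y_i)$ independent of $(W_i)$. Conditioning on $(Y_i)$, the survival function is $\mathbb{P}(T_n > z \mid Y) = \exp\bigl(-\sum_i (z - Y_i)_+\bigr)$, whose derivative has absolute value $|\{i : Y_i < z\}| \cdot \mathbb{P}(T_n > z \mid Y) \leq n$. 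Averaging over $Y$ yields $f_{T_n}(z) \leq n$ for every $z$.

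Given the density bound, the rest is routine. The CDF $F_{T_n}$ is $n$-Lipschitz, so the interquartile range satisfies $F_{T_n}^{-1}(3/4) - F_{T_n}^{-1}(1/4) \geq 1/(2n)$. Hence, for any constant $c$, a case analysis on which side of the interquartile midpoint $c$ lies shows that $\mathbb{P}(|T_n - c| \geq 1/(4n)) \geq 1/4$. Applying this with $c = \fpptime$ yields $\|T_n - \fpptime\|_p \geq (1/(4n)) \cdot (1/4)^{1/p}$ for every $1\leq p <\infty$, and applying it with $c = \mathbb{E} T_n$ (combined with $\var(T_n)=\mathbb{E}(T_n-\mathbb{E}T_n)^2$) gives $\var(T_n) \geq 1/(64 n^2)$.

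The mild point requiring care is the decomposition $T_n = \min_i (Y_i + W_i)$: one needs that the optimal path does not revisit $\vo$ (clear from non-negativity of the weights), that $\hc{n}$ minus the edges at $\vo$ remains connected so that each $Y_i$ is finite (immediate for $n \geq 2$), and that each $Y_i$ is a function of edges disjoint from all $W_j$'s. Beyond this, the argument is purely elementary, and I do not anticipate a serious obstacle.
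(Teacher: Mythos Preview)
Your argument is correct, but it takes a different route from the paper. The paper's proof is a one-liner: write $T_n=\tau_1+\tau_2$ where $\tau_1$ is the time until the first neighbor of $\vz$ is infected in Richardson's model and $\tau_2$ is the remaining time until $\vo$ is infected; by the Markov property (and the fact that for i.i.d.\ exponentials the $\arg\min$ is independent of the $\min$), $\tau_1$ and $\tau_2$ are independent with $\tau_1\sim\mathrm{Exp}(n)$, which immediately gives $\var(T_n)\geq\var(\tau_1)=1/n^2$ and, since $T_n$ is a convolution with an $\mathrm{Exp}(n)$ variable, also yields the density bound $f_{T_n}\leq n$ and hence the $L^p$ lower bounds.

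You instead peel off the \emph{last} edge at $\vo$, obtaining $T_n=\min_i(Y_i+W_i)$ with $(Y_i)$ independent of $(W_i)$, and derive the same density bound $f_{T_n}\leq n$ by a direct computation of the conditional survival function. This avoids invoking the Markov property of Richardson's model and is fully self-contained, at the cost of a slightly longer write-up; the subsequent passage from the density bound to the variance and $L^p$ lower bounds via the interquartile range is clean and makes the constants explicit, which the paper leaves implicit. The two decompositions are dual (first edge vs.\ last edge) and ultimately equivalent in strength.
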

\begin{proof}
We can write $T_n$ in terms of Richardson's model as the time until the first neighbor of $\vz$ gets infected plus the time from this event until $\vo$ gets infected. It is easy to see that these are independent, and the former is exponentially distributed with mean $\frac{1}{n}$.
\end{proof}
\begin{proposition}\label{prop:Tnminusnorm}
Let $1\leq p < \infty$ be fixed. Then $\|T_n^-\|_p=O\left(\frac{1}{n}\right)$.
\end{proposition}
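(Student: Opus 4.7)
The plan is to use the first moment bound on the BTP to obtain exponential decay of $\mathbb{P}(T_n \leq \fpptime - s)$ in both $n$ and $s$, and then integrate.

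\textbf{Step 1: Tail bound via BTP domination.} By the stochastic domination $R \leq Z$ coming from the ghost/alive coupling, any particle of Richardson's model at $\vo$ is also a particle of the BTP at $\vo$. Hence by Markov's inequality,
\begin{equation*}
\mathbb{P}(T_n \leq t) = \mathbb{P}\bigl(R(\vo,t) \geq 1\bigr) \leq \mathbb{P}\bigl(Z(\vo,t) \geq 1\bigr) \leq m(\vo,t) = (\sinh t)^n.
\end{equation*}

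\textbf{Step 2: Exponential estimate on $\sinh(\fpptime - s)$.} Since $\sinh(\fpptime) = 1$ and $\frac{d}{ds}\sinh(\fpptime-s)\big|_{s=0} = -\cosh(\fpptime) = -\sqrt{2}$, Taylor's theorem yields a constant $s_0 \in (0, \fpptime)$ such that
\begin{equation*}
\sinh(\fpptime - s) \leq 1 - s \leq e^{-s} \quad \text{for all } s \in [0, s_0].
\end{equation*}
On the remaining interval $s \in [s_0, \fpptime]$, monotonicity of $\sinh$ gives $\sinh(\fpptime - s) \leq \sinh(\fpptime - s_0) = 1 - c_0$ for some constant $c_0 > 0$. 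Combining with Step~1, there exist constants $c, C > 0$ such that for all $s \geq 0$,
\begin{equation*}
\mathbb{P}(T_n^- \geq s) = \mathbb{P}(T_n \leq \fpptime - s) \leq C e^{-cns},
\end{equation*}
where we use that $T_n^- \leq \fpptime$ deterministically to handle $s \geq \fpptime$.

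\textbf{Step 3: Integrate.} Using the tail formula for $L^p$ norms,
\begin{equation*}
\|T_n^-\|_p^p = \int_0^\infty p s^{p-1} \mathbb{P}(T_n^- \geq s)\, ds \leq \int_0^\infty p s^{p-1} C e^{-cns}\, ds = \frac{C \, \Gamma(p+1)}{(cn)^p} = O\!\left(\frac{1}{n^p}\right),
\end{equation*}
which gives $\|T_n^-\|_p = O(1/n)$ as desired.

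There is no real obstacle here: the only point that requires a sliver of care is handling the two regimes for $s$ in Step~2 separately, since the linear Taylor bound only holds near $0$, but both regimes yield exponential decay of the appropriate form. Note in particular that this argument is essentially just a quantitative version of Durrett's lower bound $T_n \geq \fpptime - o(1)$, made sharp enough to give the right $L^p$ scaling.
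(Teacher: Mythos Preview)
Your proof is correct and follows essentially the same approach as the paper: bound $\mathbb{P}(T_n \leq \fpptime - s) \leq (\sinh(\fpptime - s))^n$ via BTP domination, establish exponential decay in $ns$, and integrate via the layer-cake formula. The only cosmetic difference is that the paper obtains the uniform bound $\ln \sinh(\fpptime - s) \leq -\sqrt{2}\, s$ on all of $[0,\fpptime]$ in one stroke (using concavity of $s \mapsto \ln\sinh(\fpptime - s)$), whereas you split into a near-zero regime and a bounded-away regime; both routes yield the same $O(n^{-p})$ conclusion.
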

\begin{proof}
We have
\begin{equation}
\mathbb{E}\left[(T_n^-)^p\right] = \mathbb{E} \int_0^\infty \mathbbm{1}_{t \leq T_n^- }\, p\, t^{p-1}\,dt = \int_0^\infty p\, t^{p-1} \mathbb{P}\left(T_n  \leq \fpptime-t\right)\,dt.
\end{equation}
To bound this, we use that $\mathbb{P}\left(T_n  \leq \fpptime-t\right) \leq m(\vo, \fpptime-t)=\left(\sinh(\fpptime-t)\right)^n$ for any $t\leq \fpptime$ and $\mathbb{P}\left(T_n  \leq \fpptime-t\right) =0$ for $t>\fpptime$ (naturally $T_n$ is always non-negative). It is straightforward to show that $\ln \sinh(\fpptime-t) \leq - \sqrt{2}\,t$ for any $0\leq t \leq \fpptime$. Using this, we conclude that
\begin{equation}
\mathbb{E}\left[(T_n^-)^p\right] \leq \int_0^\fpptime p t^{p-1} e^{-\sqrt{2}\, n t}\,dt = O\left(\frac{1}{n^p}\right).
\end{equation}
\end{proof}

We now turn to the upper bound on $T_n$. Assume $n\geq 4$. Let $\{W_e\}_{e\in E\left(\hc{n}\right)}$ be a collection of independent exponentially distributed random variables with expected value $1$, denoting the passage times of the edges in $\hc{n}$. For any vertex $v$ adjacent to $\vz$ we will use $W_v$ to denote the passage time of the edge between $\vz$ and $v$. Similarly, for any $v$ adjacent to $\vo$, $W_v$ denotes the passage time of the edge between $v$ and $\vo$.

Condition on the weights of all edges connected to either $\vz$ or $\vo$. We pick vertices $a_1$ and $a_2$ adjacent to $\vz$ such that $W_{a_1}$ and $W_{a_2}$ have the smallest and second smallest edge weights respectively among all edges adjacent to $\vz$. Among all $n-2$ neighboring vertices of $\vo$ which are not antipodal to $a_1$ or $a_2$ we then pick $b_1$ and $b_2$ such that $W_{b_1}$ and $W_{b_2}$ have the smallest and second smallest values. Then $W_{a_1}$, $W_{a_2}-W_{a_1}$, $W_{b_1}$ and $W_{b_2}-W_{b_1}$ are independent exponentially distributed random variables with respective expected values $\frac{1}{n}$, $\frac{1}{n-1}$, $\frac{1}{n-2}$ and $\frac{1}{n-3}$.

As $a_1$ and $a_2$ are adjacent to $\vz$ and $b_1$ and $b_2$ are adjacent to $\vo$, there is exactly one coordinate in each of $a_1$ and $a_2$ which is $1$, and exactly one coordinate in $b_1$ and $b_2$ which is $0$. Let the locations of these coordinates in $a_1$, $a_2$, $b_1$ and $b_2$ be denoted by $i$, $j$, $k$ and $l$ respectively. Note that the requirement on $a_1$, $a_2$, $b_1$ and $b_2$ not to be antipodal means that $i$, $j$, $k$ and $l$ are all distinct. We define $H_1$ as the induced subgraph of $\hc{n}$ consisting of all vertices $v\in \hc{n}$ such that the $i$:th coordinate is $1$ and the $k$:th coordinate is $0$. We similarly define $H_2$ as the induced subgraph of $\hc{n}$ consisting of all vertices $v\in \hc{n}$ such that the $j$:th coordinate is $1$ and the $l$:th coordinate is $0$. We furthermore define $H_2'$ as the induced subgraph of $\hc{n}$ whose vertex set is given by $H_2\setminus H_1$. Note that $H_1$ and $H_2'$ are vertex disjoint and hence also edge disjoint.

The idea to bound $T_n$ is essentially to express it in terms of the minimum of the first-passage time from $a_1$ to $b_1$ in $H_1$ and the first-passage time from $a_2$ to $b_2$ in $H_2'$, where the passage times for the edges are taken from $\{W_e\}_{e\in E(\hc{n})}$. As $H_1$ and $H_2$ are both isomorphic to $\hc{n-2}$, where $a_1$ and $b_1$ are antipodal in $H_1$ and $a_2$ and $b_2$ are antipodal in $H_2$, Corollary \ref{cor:Tnnotzero} implies that the corresponding first-passage times in each of $H_1$ and $H_2$ are at most $\fpptime$ with probability bounded away from $0$. However, for our proof it is not needed to make this connection. Rather, we will make use of the slightly stronger statement that the same holds true for $H_2'$. The following proposition is a consequence of Corollary \ref{cor:Tnnotzero}. We postpone the proof of this to the end of the section.

\begin{proposition}\label{prop:H2prime}
There exists a constant $\varepsilon_2 > 0$ such that for all $n\geq 4$, with probability at least $\varepsilon_2$ the first-passage time in $H_2'$ from $a_2$ to $b_2$ is at most $\fpptime$.
\end{proposition}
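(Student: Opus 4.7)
The plan is to follow the strategy of Corollary \ref{cor:Tnnotzero}, but applied to the BTP on the non-hypercube graph $H_2'$. Concretely, I would apply Theorem \ref{thm:uncontestedBTP} to the BTP on $H_2'$ originating at $a_2$, with target vertex $b_2$ and time $\fpptime$, yielding
\[
  \mathbb{P}\bigl(T_{H_2'}(a_2,b_2)\le\fpptime\bigr) \;\ge\; S_{H_2'}(b_2,\fpptime)\,\exp\!\bigl(-B_{H_2'}(b_2,\fpptime)/S_{H_2'}(b_2,\fpptime)\bigr),
\]
where $S_{H_2'}$ and $B_{H_2'}$ are the natural analogues of $S$ and $B$ for the BTP on $H_2'$. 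Since this probability is strictly positive for every finite $n \ge 4$, it suffices to establish a positive asymptotic lower bound; the uniform constant $\varepsilon_2 > 0$ then follows from finite-$n$ positivity exactly as in Corollary \ref{cor:Tnnotzero}.

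Although $H_2'$ is not vertex-transitive, it is invariant under every permutation of the $n-4$ coordinates outside $\{i,j,k,l\}$. Writing each vertex of $H_2'$ as $v=(\alpha,\beta,\gamma)$, with $(\alpha,\beta)\in\{(0,0),(0,1),(1,1)\}$ encoding the $(i,k)$-pair and $\gamma\in\{0,1\}^{n-4}$ the remaining coordinates, the initial-value problem \eqref{eq:DEm} for $m_{H_2'}$ Fourier-decomposes over $\{0,1\}^{n-4}$: for each character $\chi_s$ it reduces to the $3\times 3$ linear system $\hat u' = (\lambda(s)I + B)\hat u$, where $\lambda(s) = (n-4)-2|s|$ and $B$ is the adjacency matrix of the three-vertex path (with eigenvalues $\pm\sqrt{2}$ and $0$). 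Solving this system and inverting the Fourier transform gives the closed form
\[
  m_{H_2'}(b_2,\fpptime) \;=\; \sinh^2\!\bigl(\fpptime/\sqrt{2}\bigr)\,\sinh^{n-4}(\fpptime) \;=\; \sinh^2\!\bigl(\fpptime/\sqrt{2}\bigr),
\]
using $\sinh\fpptime=1$, which is a strictly positive constant independent of $n$.

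For $B_{H_2'}$, I would couple the BTP on $H_2'$ with the BTP on $H_2\cong\hc{n-2}$ so that the former consists of exactly the particles in the latter whose entire ancestral line lies in $H_2'$; this immediately gives $B_{H_2'}(b_2,\fpptime)\le B_{H_2}(b_2,\fpptime)$, and the right-hand side is uniformly bounded in $n$ by Proposition \ref{prop:BSestimates} applied to $\hc{n-2}$.

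The hardest step is the lower bound on $S_{H_2'}$. The plan is to combine $S_{H_2'} \ge m_{H_2'} - A_{H_2'}$ with the bound $A_{H_2'}(b_2,\fpptime) < \sinh^2(\fpptime/\sqrt{2})$ in the limit. The quantity $A_{H_2'}$ admits an integral representation in the spirit of Proposition \ref{prop:ABformulas}, but the non-regularity of $H_2'$ breaks the clean convolution identities of Lemmas \ref{lemma:derivatives} and \ref{lemma:conv}; to get around this, the same three-species Fourier decomposition is pushed through the two-particle calculation, producing a $3\times 3$ matrix-valued integrand that admits a saddle-point / dominated-convergence analysis parallel to Proposition \ref{prop:Alimit}. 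The arithmetic is heavier than in the fully symmetric case, but the limiting value of $A_{H_2'}(b_2,\fpptime)$ is strictly less than $\sinh^2(\fpptime/\sqrt{2})$, so Theorem \ref{thm:uncontestedBTP} delivers a positive asymptotic lower bound on $\mathbb{P}(T_{H_2'}(a_2,b_2) \le \fpptime)$, completing the proof.
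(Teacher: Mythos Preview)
Your approach --- applying Theorem~\ref{thm:uncontestedBTP} directly to the BTP on $H_2'$ --- is conceptually sound, and your computation of $m_{H_2'}(b_2,\fpptime)=\sinh^2(\fpptime/\sqrt{2})$ via the product structure $H_2'\cong P_3\,\Box\,\hc{n-4}$ is correct, as is the coupling bound $B_{H_2'}\le B_{H_2}$. The genuine gap is the last step: you assert that $\lim_{n\to\infty} A_{H_2'}(b_2,\fpptime)<\sinh^2(\fpptime/\sqrt{2})\approx 0.441$ but do not prove it. The naive coupling bound $A_{H_2'}\le A_{H_2}\to\fpptime/\sqrt{2}\approx 0.623$ is too weak here, so you really would have to carry out the matrix-valued Laplace computation you sketch and then verify a specific numerical inequality at the end. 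That inequality is plausible (heuristically the ratio $A/m$ should be close to its hypercube value, giving $A_{H_2'}\approx 0.27$), but it is unverified, and your argument collapses if it fails.

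The paper sidesteps this computation entirely. Rather than apply Theorem~\ref{thm:uncontestedBTP} to $H_2'$, it works on the full $H_2\cong\hc{n-2}$, where Corollary~\ref{cor:Tnnotzero} already guarantees an uncontested particle at $b_2$ by time $\fpptime$ with probability bounded away from zero. The new ingredient is Lemma~\ref{lemma:shortpaths}: with probability $1-o(1)$ every particle at $b_2$ at time $\fpptime$ has an ancestral line of length at most $1.25(n-2)$, so at least $\tfrac{7}{8}(n-2)$ of the coordinate directions are traversed exactly once along it. By the permutation-invariance of the BTP law on $\hc{n-2}$, the two distinguished directions $i'$ and $k'$ are then each traversed exactly once, with $i'$ before $k'$, with probability roughly $\tfrac12(\tfrac78)^2$; on that event the ancestral line lies entirely in $H_2'$, and a particle uncontested in $\mathbf{X}$ is automatically uncontested in the coupled subprocess $\mathbf{X}'$. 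This reduces Proposition~\ref{prop:H2prime} to Corollary~\ref{cor:Tnnotzero} plus a short symmetry argument, with no new $A$, $B$, or $S$ estimates required.
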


Now, let $\xi$ denote the indicator function for the event that the first-passage time from $a_2$ to $b_2$ in $H_2'$ is at most $\fpptime$. As $H_1$ is isomorphic to $\hc{n-2}$ it is clear that the first-passage time from $a_1$ to $b_1$ in $H_1$ is distributed as $T_{n-2}$, and so we may couple $T_{n-2}$ to $\{W_e\}_{e\in E\left(\hc{n}\right)}$ such that $T_{n-2}$ denotes this quantity. Note that this means that $\xi$ and $T_{n-2}$ are independent random variables. With this coupling it is clear that $T_n \leq W_{a_1} + W_{b_1} + T_{n-2}$ as this is the passage time of the path that traverses the edge from $\vz$ to $a_1$, then follows the path to $b_1$ in $H_1$ with minimal passage time and lastly traverses the edge from $b_1$ to $\vo$. Furthermore, if $\xi=1$ we similarly see that $T_n \leq W_{a_2} + W_{b_2} + \fpptime$. Combining these bounds we see that for any $n\geq 4$ we have
\begin{equation}\label{eq:recineq}
T_n \leq \xi\left(W_{a_2}+W_{b_2}+\fpptime\right) + (1-\xi)\left(W_{a_1}+W_{b_1}+T_{n-2}\right).
\end{equation}

We may interpret this inequality as follows. We flip a coin $\xi$. If the coin turns up heads then $T_n$ is bounded by $\fpptime$ plus a small penalty. If the coin turns up heads, then we can bound $T_n$ by a small penalty plus $T_{n-2}$, where $T_{n-2}$ is independent of $\xi$. Assuming $n$ is sufficiently large, we can then repeat this process on $T_{n-2}$ and so on until one coin turns up heads. As each coin toss ends up heads with probability at least $\varepsilon_2>0$, this is likely to occur after $O(1)$ steps. Hence the total penalty before this occurs is likely to be small.

We now employ \eqref{eq:recineq} to bound the $L^p$-norm of $T_n^+$. By subtracting $\fpptime$ and taking the positive part of both sides we get
\begin{equation}\label{eq:recineqtau}
T_n^+ \leq \xi\left(W_{a_2}+W_{b_2}\right) + (1-\xi)\left(W_{a_1}+W_{b_1}+T_{n-2}^+\right).
\end{equation}
As $W_{a_2} \geq W_{a_1}$ and $W_{b_2}\geq W_{b_1}$ we can replace $\xi\left(W_{a_2}+W_{b_2}\right) + (1-\xi)\left(W_{a_1}+W_{b_1}\right)$ in the right-hand side of \eqref{eq:recineqtau} by $W_{a_2}+W_{b_2}$. Taking the $L^p$-norm of both sides we obtain the inequality
\begin{equation}
\|T_n^+\|_{p} \leq \|W_{a_2}+W_{b_2}\|_p + \|(1-\xi)T_{n-2}^+\|_p.
\end{equation}
For each fixed $p$, it is straightforward to show that $\|W_{a_2}+W_{b_2}\|_p = O\left(\frac{1}{n}\right)$. Furthermore, as $\xi$ and $T_{n-2}^+$ are independent we have $\|(1-\xi)T_{n-2}^+\|_p = \|(1-\xi)\|_p\|T_{n-2}^+\|_p \leq (1-\varepsilon_2)^{\frac{1}{p}}\|T_{n-2}^+\|_p$. Hence, for any fixed $p$ we have the inequality
\begin{equation}
\|T_n^+\|_{p} \leq O\left(\frac{1}{n}\right)+ (1-\varepsilon_2)^{\frac{1}{p}}\|T_{n-2}^+\|_p.
\end{equation}
As $(1-\varepsilon_2)^{\frac{1}{p}} < 1$ it follows that we must have $\|T_n^+\|_p = O\left(\frac{1}{n}\right)$. Combining this with the corresponding bound on $\|T_n^-\|_p$ from Proposition \ref{prop:Tnminusnorm}, we have $\|T_n-\fpptime\|_p = O\left(\frac{1}{n}\right)$, as desired.

\qed

\vspace{0.3cm}

It only remains to prove Proposition \ref{prop:H2prime}.

In the following argument, we will identify $H_2$ with $\hc{n-2}$ by simply disregarding the two coordinates of the vertices in $H_2$ which are fixed. Hence we will consider $a_2$ and $b_2$ to be the all zeroes and all ones vertices in $\hc{n-2}$ respectively. When seen in this light, is clear that $H_2'$ is the induced subgraph if $H_2$ consisting of all vertices where either the $i'$:th coordinate is $1$ or the $k'$:th coordinate is $0$ for some $i'\neq k'$.

It makes sense to think of $H_2'$ as half a hypercube. For instance, exactly half of the oriented paths from $a_2$ to $b_2$ in $H_2$ are contained in $H_2'$, namely those that move in direction $i'$ before direction $k'$. Now, the paths from $a_2$ to $b_2$ in $H_2$ which are relevant for the early arrivals in the BTP are extremely unlikely to be oriented, but they are not too far from being oriented either. Our approach to showing Proposition \ref{prop:H2prime} is essentially to show that $H_2'$ is a sufficiently large subset of $H_2$ that when considering a BTP on $H_2$ originating at $a_2$, if there is an uncontested particle at $b_2$ at time $\fpptime$, then with probability bounded away from $0$, its ancestral line is contained in $H_2'$.

In order to show this, we need a property of the BTP which was hinted at briefly in \cite{fp93}. Let $\mathbf{X}$ denote a BTP on $\hc{n}$ originating at $\vz$. For any set of paths $A$ in $\hc{n}$, let $X_t(A)$ denote the expected number of particles in the BTP at time $t$ whose ancestral line follows some path in $A$. Let $\{y(t)\}_{t\geq 0}$ denote a simple random walk on $\hc{n}$ starting at $\vz$ with rate $n$, and for each $t\geq 0$ let $\sigma_t$ denote the path that the random walk has followed up to time $t$.

\begin{lemma}\label{lemma:contorandwalk}
Let $S$ denote the set of paths from $\vz$ to $\vo$ in $\hc{n}$. For any $S' \subseteq S$ and for any $t\geq 0$ we have
\begin{equation}\label{eq:distpaths}
\frac{X_t(S')}{X_t(S)} = \mathbb{P}\left(\mathbf{\sigma}_t \in S' \middle\vert y(t)=\vo\right).
\end{equation}
\end{lemma}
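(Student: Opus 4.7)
The plan is to prove the identity path-by-path, by showing that the expected BTP count $X_t(\{\sigma\})$ and the random walk probability $\mathbb{P}(\sigma_t=\sigma)$ differ by the same multiplicative factor for every path $\sigma\in S$. To compute the BTP side, I would fix a path $\sigma$ of length $k$ from $\vz$ to $\vo$ and apply Lemma \ref{lemma:palmBTP} exactly as in the derivation of \eqref{eq:Ssigma}. The key point is that although \eqref{eq:Ssigma} was stated for $S_\sigma$ (and used only for simple paths), the underlying Palm computation only uses that $\sigma$ is a path; when the integrand is just the indicator that the particle is born by time $t$, we get
\begin{equation*}
X_t(\{\sigma\}) = \int_0^\infty\cdots\int_0^\infty \mathbbm{1}_{z_1+\dots+z_k\leq t}\,dz_1\cdots dz_k = \frac{t^k}{k!}.
\end{equation*}

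For the random walk side, I would use that $y(t)$ jumps at total rate $n$ and at each jump chooses a neighbor uniformly, so the number of jumps by time $t$ is Poisson$(nt)$ and, given this number, the sequence of edges is i.i.d.\ uniform over the $n$ coordinates. Hence the probability of having made exactly $k$ jumps in time $t$ and having traversed the edges of $\sigma$ in the prescribed order is
\begin{equation*}
\mathbb{P}(\sigma_t=\sigma) = e^{-nt}\frac{(nt)^k}{k!}\cdot\frac{1}{n^k} = e^{-nt}\frac{t^k}{k!}.
\end{equation*}
Comparing the two displays yields $X_t(\{\sigma\}) = e^{nt}\,\mathbb{P}(\sigma_t=\sigma)$ for every $\sigma \in S$, independent of the length or shape of $\sigma$.

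Summing this identity over $\sigma\in S'$ and over $\sigma\in S$, the common factor $e^{nt}$ cancels in the ratio. Since every path in $S$ ends at $\vo$, the events $\{\sigma_t\in S\}$ and $\{y(t)=\vo\}$ coincide almost surely, so
\begin{equation*}
\frac{X_t(S')}{X_t(S)}=\frac{\mathbb{P}(\sigma_t\in S')}{\mathbb{P}(\sigma_t\in S)}=\frac{\mathbb{P}(\sigma_t\in S',\,y(t)=\vo)}{\mathbb{P}(y(t)=\vo)}=\mathbb{P}(\sigma_t\in S'\mid y(t)=\vo),
\end{equation*}
as claimed. There is no real obstacle here: the only care required is in reusing the Palm calculation of Lemma \ref{lemma:palmBTP} for a single (possibly non-simple) path and then matching it against the Poissonian path description of the rate-$n$ random walk, which is essentially the reason the BTP and the simple random walk on $\hc{n}$ share the same combinatorial structure along ancestral lines.
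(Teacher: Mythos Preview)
Your proposal is correct and follows essentially the same approach as the paper: compute $X_t(\{\sigma\})=t^k/k!$ via Lemma~\ref{lemma:palmBTP}, compare with $\mathbb{P}(\sigma_t=\sigma)=e^{-nt}t^k/k!$, and take the ratio after summing. You give a bit more detail on the random walk side and make explicit the identification $\{\sigma_t\in S\}=\{y(t)=\vo\}$, but the argument is the same.
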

\begin{proof}
Let $\sigma$ be any fixed path from $\vz$ to $\vo$ and let $l$ denote the length of $\sigma$. By applying Lemma \ref{lemma:palmBTP}, we get
\begin{equation}
X_t(\{\sigma\}) = \mathbb{E} \sum_{x \in V_{\sigma}(\mathbf{X})} \mathbbm{1}_{T(\mathbf{X}, x) \leq t} = \int_0^\infty \dots \int_0^\infty \mathbbm{1}_{z_1+\dots+z_l\leq t}\,dz_1\dots\,dz_l = \frac{t^l}{l!},
\end{equation}
where $T(\mathbf{X}, x)$ denotes the birth time of $x$. In comparison, it is straightforward to see that $\mathbb{P}\left(\sigma_t = \sigma \right) = e^{-nt} \frac{t^l}{l!}$. It follows that, for any set of paths $A$, we have $X_t(A) = e^{nt}\mathbb{P}(\sigma_t \in A)$, and so in particular
\begin{equation}
\frac{X_t(S')}{X_t(S)} = \frac{\mathbb{P}(\sigma_t \in S')}{\mathbb{P}(\sigma_t \in S)} = \mathbb{P}\left(\mathbf{\sigma}_t \in S' \middle\vert y(t)=\vo\right),
\end{equation}
as desired.
\end{proof}

\begin{lemma}\label{lemma:shortpaths}
Let $\mathbf{X}$ be a BTP on $\hc{n}$ originating at $\vz$. Then with probability $1-o(1)$, all particles at $\vo$ at time $\fpptime$ have ancestral lines of length $\sqrt{2}\, \fpptime n \pm o(n)$.
\end{lemma}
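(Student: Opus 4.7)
The plan is to translate the statement into a concentration claim for a conditioned continuous-time random walk via Lemma \ref{lemma:contorandwalk}, and then apply a first moment bound. Set $L_n = \sqrt{2}\,\fpptime\,n$, and for a sequence $\epsilon_n \to 0$ to be chosen at the end, let $S_{\text{bad}} \subseteq S$ be the set of paths from $\vz$ to $\vo$ whose length lies outside $[L_n - \epsilon_n n,\, L_n + \epsilon_n n]$. By Markov's inequality, the probability that some particle at $\vo$ at time $\fpptime$ has ancestral line in $S_{\text{bad}}$ is bounded by the expected number of such particles, $X_\fpptime(S_{\text{bad}})$, so it suffices to show $X_\fpptime(S_{\text{bad}}) = o(1)$.

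First I would invoke Lemma \ref{lemma:contorandwalk} to rewrite
\[
X_\fpptime(S_{\text{bad}}) = X_\fpptime(S) \cdot \mathbb{P}\bigl(\,|\sigma_\fpptime| \notin [L_n-\epsilon_n n,\, L_n+\epsilon_n n] \,\bigm|\, y(\fpptime) = \vo\bigr),
\]
where $|\sigma_\fpptime|$ denotes the number of jumps made by the rate-$n$ walk $y$ up to time $\fpptime$. The prefactor equals $m(\vo, \fpptime) = (\sinh \fpptime)^n = 1$ by our choice $\fpptime = \sinh^{-1}(1)$, so the task reduces to proving concentration of $|\sigma_\fpptime|$ around $L_n$ under the conditional law.

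Next I would exploit the coordinate decomposition noted after Theorem \ref{thm:sigmamin}: $y$ is the product of $n$ independent rate-$1$ walks on $\{0,1\}$, and conditioning on $y(\fpptime)=\vo$ conditions each coordinate walk independently to end at $1$. For a single coordinate starting at $0$, the unconditioned jump count is $\mathrm{Poisson}(\fpptime)$, and the endpoint equals $1$ iff that count is odd. A short calculation shows that for $N$ distributed as $\mathrm{Poisson}(\fpptime)$ conditioned on being odd,
\[
\mathbb{E} N = \frac{\fpptime\cosh\fpptime}{\sinh\fpptime} = \fpptime\coth\fpptime = \sqrt{2}\,\fpptime
\]
(using $\sinh\fpptime=1$, $\cosh\fpptime=\sqrt{2}$), and $\var N = O(1)$. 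Hence $|\sigma_\fpptime|$ under the conditional law is a sum of $n$ i.i.d.\ copies of $N$ with mean $L_n$ and variance $O(n)$, and Chebyshev's inequality gives a bound of $O(\epsilon_n^{-2} n^{-1})$ on the probability of deviating by more than $\epsilon_n n$. Taking for instance $\epsilon_n = n^{-1/3}$ makes this $o(1)$ and completes the argument.

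The main obstacle is essentially bookkeeping: Lemma \ref{lemma:contorandwalk} and the identity $(\sinh\fpptime)^n=1$ do the substantive work, and everything then reduces to elementary concentration of an i.i.d.\ sum. The only point requiring genuine care is the identification of the conditional distribution of the per-coordinate jump counts, for which the independence of the coordinate walks and the observation that the endpoint of a rate-$1$ walk on $\{0,1\}$ records exactly the parity of the jump count combine to give the explicit Poisson-conditioned-on-odd law and the clean mean $\sqrt{2}\fpptime$.
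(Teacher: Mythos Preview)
Your proposal is correct and follows essentially the same route as the paper's proof: reduce via Lemma~\ref{lemma:contorandwalk} and the identity $m(\vo,\fpptime)=1$ to a concentration statement for the conditioned random walk, decompose into independent coordinates, compute the per-coordinate mean $\fpptime\coth\fpptime=\sqrt{2}\,\fpptime$, and conclude by a law of large numbers. The only cosmetic difference is that you make the concentration step quantitative via Chebyshev, whereas the paper simply invokes the law of large numbers.
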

\begin{proof}
We apply Lemma \ref{lemma:contorandwalk} with $t=\fpptime$. As $X_u(S)=m(\vo, \fpptime)=1$ we see that it suffices to show that the number of steps performed by $\{y(t)\}_{t\geq 0}$ up to time $\fpptime$, conditioned on the event that $y(\fpptime)=\vo$, is concentrated around $\sqrt{2}\, \fpptime n$.

In order to show this, we note that if $y(t) = \left(y_1(t), \dots, y_n(t)\right)$ is a simple random walk on $\hc{n}$ with rate $n$, then each coordinate, $y_i(t)$, is an independent simple random walk on $\{0, 1\}$ with rate one. Hence, conditioned on the event that $y(\fpptime)=\vo$, each coordinate $y_i(t)$ is an independent simple random walk on $\{0, 1\}$ conditioned on the event that $y_i(\fpptime)=1$. It is easy to see that the expected number of steps taken by such a process up to time $\fpptime$ is
\begin{equation}
\frac{e^{-\fpptime} }{e^{-\fpptime} }\frac{\fpptime + \frac{\fpptime^3}{2!} + \frac{\fpptime^5}{4!} + \dots }{\fpptime + \frac{\fpptime^3}{3!} + \frac{\fpptime^5}{5!} + \dots } = \fpptime \coth \fpptime = \sqrt{2}\,\fpptime.
\end{equation}
The lemma follows by the law of large numbers.
\end{proof}

\begin{proof}[Proof of Proposition \ref{prop:H2prime}]
Consider the BTP:s $\mathbf{X}$ and $\mathbf{X}'$ on $H_2$ and $H_2'$ respectively, both originating at $a_2$. We may couple these processes such that $\mathbf{X}'$ consists of all particles in $\mathbf{X}$ whose ancestral lines are contained in $H_2'$. Note that any particle in $\mathbf{X}'$ is uncontested in $\mathbf{X'}$ if it is uncontested in $\mathbf{X}$.

As $H_2$ is graph isomorphic to $\hc{n-2}$, we know from Corollary \ref{cor:Tnnotzero} that, with probability bounded away from zero, there exists an uncontested particle in $\mathbf{X}$ at $b_2$ at time $\fpptime$. Furthermore, by Lemma \ref{lemma:shortpaths} we know that if such a particle exists, then with probability $1-o(1)$ the length of its ancestral line is at most $1.25 (n-2)$.

Let us now condition on the event that there exists an uncontested particle $x$ in $\mathbf{X}$ at $\vo$ at time $\fpptime$ whose ancestral line is of length at most $1.25(n-2)$. As a path from $\vz$ to $\vo$ must traverse edges in each of the $n-2$ directions of $\hc{n-2}$ an odd number of times, this bound on the length of the ancestral line implies that there are at least $\frac{7}{8}(n-2)$ directions in which the path followed by the ancestral line of $x$ only traverses one edge. By the symmetry of the hypercube, the distribution of this path must be invariant under permutation of coordinates. Hence, with probability $\approx \frac{49}{128}$, this path only traverses one edge in direction $i'$ and one in direction $k'$, and traverses the edge in direction $i'$ before that in direction $k'$. Hence with probability bounded away from $0$, this path is contained in $H_2'$.

We conclude that with probability bounded away from zero, there exists an uncontested particle at $\vo$ at time $\fpptime$ in $\mathbf{X}'$. The proposition follows from the fact that Richardson's model stochastically dominates the set of uncontested particles in a BTP.
\end{proof}

\section{Proof of Theorem \ref{thm:sigmamin}}\label{sec:proofofsigmamin}

In the following proof we adopt the notation $X_t(A)$, $\{y(t)\}_{t\geq 0}$ and $\sigma_t$ from the previous section. Hence, $\sigma_n$ in the statement of Theorem \ref{thm:sigmamin} will here be denoted by $\sigma_\fpptime$ conditioned on $y(\fpptime)=\vo$. For any set of paths $A$ in $\hc{n}$ we let $Z_t(A)$ denote the expected number of simple paths in $A$ starting at $\vz$ with passage time at most $t$. As $\Gamma_n$ must be a simple path, it follows from the union bound that for any $c\in \mathbb{R}$ and any set $A$ of paths from $\vz$ to $\vo$ in $\hc{n}$, we have
\begin{equation}
\mathbb{P}\left(\Gamma_n \in A \right) \leq Z_{\fpptime+\frac{c}{n} }(A) + \mathbb{P}\left(T_n \geq \fpptime+\frac{c}{n}\right).
\end{equation}
In order to bound the right-hand side of this expression in terms of $\sigma_{\fpptime}$, we first observe that for any $t \geq 0$ we have
\begin{align*}
Z_t(A) &= \sum_{\substack{\sigma \in A\\ \sigma \text{ simple}} } \int_0^\infty \dots \int_0^\infty \mathbbm{1}_{t_1+\dots+t_{\abs{\sigma}} \leq t} e^{-t_1-\dots-t_{\abs{\sigma}}} \,dt_1\dots \,dt_{\abs{\sigma}}\\
&\leq \sum_{\sigma \in A } \int_0^\infty \dots \int_0^\infty \mathbbm{1}_{t_1+\dots+t_{\abs{\sigma}} \leq t} \,dt_1\dots \,dt_{\abs{\sigma}}\\
&=\sum_{\sigma \in A } \frac{t^{\abs{\sigma}}}{\abs{\sigma}!} =X_t(A).
\end{align*}
Secondly, by the Cauchy-Schwarz inequality
\begin{align*}
X_{\fpptime + \frac{c}{n}}(A) &= \sum_{\sigma\in A} \frac{\fpptime^{\abs{\sigma}} }{\abs{\sigma}!} 1 \cdot \left(1+\frac{c}{\fpptime n}\right)^{\abs{\sigma}} \leq \sqrt{\sum_{\sigma\in A} \frac{\fpptime^{\abs{\sigma}} }{\abs{\sigma}!} } \cdot \sqrt{ \sum_{\sigma\in A} \frac{\fpptime^{\abs{\sigma}} }{\abs{\sigma}!} \left(1+\frac{c}{\fpptime n}\right)^{2\abs{\sigma}}}\\
&= \sqrt{X_\fpptime(A)} \cdot \sqrt{ X_{\fpptime \left(1+\frac{c}{\fpptime n}\right)^2}(A)} \leq \sqrt{X_\fpptime (A)} \cdot \sqrt{m\left(\vo, \fpptime\left(1+\frac{c}{\fpptime n}\right)^2\right)}.
\end{align*}
Note that $m\left(\vo, \fpptime\left(1+\frac{c}{\fpptime n}\right)^2\right)$ is bounded as $n\rightarrow\infty$. It follows from Lemma \ref{lemma:contorandwalk} that
\begin{equation}\label{eq:sigmaminprob}
\mathbb{P}\left(\Gamma_{n} \in A\right) \leq \sqrt{ \mathbb{P}\left( \sigma_{\fpptime} \in A\middle\vert y(\fpptime)=\vo\right) }\sqrt{m\left(\vo, \fpptime\left(1+\frac{c}{\fpptime n}\right)^2\right)}   + \mathbb{P}\left(T_n \geq \fpptime + \frac{c}{n}\right).
\end{equation}

Now, consider any asymptotically almost sure property of $\sigma_{\fpptime}$ conditioned on $y(\fpptime)=\vo$. For each $n \geq 1$ let $A_n$ denote the set of paths from $\vz$ to $\vo$  in $\hc{n}$ that do not have this property. Then, by taking $\limsup$ of both sides in \eqref{eq:sigmaminprob} we get
\begin{equation}
\limsup_{n\rightarrow\infty} \mathbb{P}\left(\Gamma_{n} \in A_n\right) \leq \limsup_{n\rightarrow\infty} \mathbb{P}\left(T_n \geq \fpptime + \frac{c}{n}\right).
\end{equation}
The general case of Theorem \ref{thm:sigmamin} follows from Theorem \ref{thm:unorientedFPP} by letting $c\rightarrow\infty$. For the special case of the length of $\Gamma_n$, see the proof of Lemma \ref{lemma:shortpaths}.
\qed

\section*{Acknowledgements}
I am very grateful to my supervisor Peter Hegarty for his constant support and for numerous insightful discussions and comments during this project. I would also like to thank Jeffrey Steif, Johan Tykesson and Johan Wästlund for valuable input on various aspects of this topic.

\begin{bibdiv}
\begin{biblist}
\bib{d88}{book}{
   author={Durrett, Richard},
   title={Lecture notes on particle systems and percolation},
   series={The Wadsworth \& Brooks/Cole Statistics/Probability Series},
   publisher={Wadsworth \& Brooks/Cole Advanced Books \& Software, Pacific
   Grove, CA},
   date={1988},
   pages={viii+335},
   isbn={0-534-09462-7},
   review={\MR{940469 (89k:60157)}},
}

\bib{fp93}{article}{
   author={Fill, James Allen},
   author={Pemantle, Robin},
   title={Percolation, first-passage percolation and covering times for
   Richardson's model on the $n$-cube},
   journal={Ann. Appl. Probab.},
   volume={3},
   date={1993},
   number={2},
   pages={593--629},
   issn={1050-5164},
   review={\MR{1221168 (94h:60145)}},
}

\bib{bk97}{article}{
   author={Bollob{\'a}s, B{\'e}la},
   author={Kohayakawa, Yoshiharu},
   title={On Richardson's model on the hypercube},
   conference={
      title={Combinatorics, geometry and probability},
      address={Cambridge},
      date={1993},
   },
   book={
      publisher={Cambridge Univ. Press, Cambridge},
   },
   date={1997},
   pages={129--137},
   review={\MR{1476439 (98j:05122)}},
}

\bib{hm13}{article}{
   author={Hegarty, Peter},
   author={Martinsson, Anders},
   title={On the existence of accessible paths in various models of fitness landscapes},
   journal={Ann. Appl. Probab.},
   volume={24},
   date={2014},
   number={4},
   pages={1375-1395},   
   doi={10.1214/13-AAP949}
}

\bib{a89}{book}{
   author={Aldous, David},
   title={Probability approximations via the Poisson clumping heuristic},
   series={Applied Mathematical Sciences},
   volume={77},
   publisher={Springer-Verlag, New York},
   date={1989},
   pages={xvi+269},
   isbn={0-387-96899-7},
   review={\MR{969362 (90k:60004)}},
   doi={10.1007/978-1-4757-6283-9},
}

\bib{sw08}{book}{
   author={Schneider, Rolf},
   author={Weil, Wolfgang},
   title={Stochastic and integral geometry},
   series={Probability and its Applications (New York)},
   publisher={Springer-Verlag, Berlin},
   date={2008},
   pages={xii+693},
   isbn={978-3-540-78858-4},
   review={\MR{2455326 (2010g:60002)}},
   doi={10.1007/978-3-540-78859-1},
}

\end{biblist}
\end{bibdiv}

\end{document}